\title{Cartesian Factorization Systems and Grothendieck Fibrations}
\author{David Jaz Myers}
\date{April 2020}
\begin{document}

\maketitle

\begin{abstract}
Every Grothendieck fibration gives rise to a vertical/cartesian orthogonal
factorization system on its domain. We define a \emph{cartesian factorization
  system} to be an orthogonal factorization in which the left class satisfies
2-of-3 and is closed under pullback along the right class. We endeavor to show
that this definition abstracts crucial features of the vertical/cartesian factorization
system associated to a Grothendieck fibration, and give comparisons between
various 2-categories of factorization systems and Grothendieck fibrations to
demonstrate this relationship. We then give a construction which corresponds to
the fiberwise opposite of a Grothendieck fibration on the level of cartesian
factorization systems.

Apart from the final double categorical results, this paper is entirely review
of previously established material. It should be read as an expository note.
\end{abstract}

\begin{rmk}
  After posting an earlier draft on the arXiv, I was notified by a number of
  people that the results in this paper have appeared in various forms
  elsewhere. The current draft has been updated in that light. Apart from the
  Grothendieck double construction and very final theorem (to my knowledge), all
  the definitions and theorems have appeared in the literature; this 
  paper, then, can be read as an expository account. I would like to thank Jonas
  Frey, Fosco Loregian, Richard Garner, Edoardo Lanari, and Charles Barwick for
  bringing these references to my attention.
\end{rmk}

A Grothendieck fibration $p : \Ea \to \Ba$ is a way of packaging the data of an
indexed category $\Ea_B$ varying with $B \in \Ba$. There is an involution on
indexed categories giving by taking the pointwise dual: namely, $\Ea_B \mapsto
\Ea_B\op$. What does this involution look like on the corresponding Grothendieck
fibrations?

The Grothendieck construction of the fiberwise opposite of an indexed category
is called the category of \emph{generlized lenses} by Spivak
\cite{spivak2019generalized}. It has uses in a general theory of open dynamical
systems \cite{jaz2020dynamical}.

A hint of how this construction should proceed is given by the theory of polynomial functors \cite{gambino_kock_2013}. A polynomial functor $P :
\Set \to \Set$ is a functor of the form
$$P(X) = \sum_{b \in B} X^{E_b}$$
for some family of sets $E_b$ varying with $b \in B$. We may see this family as
a function $p : E \to B$, and the data of a polynomial functor is precisely the
data of such a function. Any natural transformation between polynomial functors
$P$ and $P'$ may be represented by an odd diagram of the following sort, called
a morphism of polynomials:
\[
\begin{tikzcd}
E \arrow[d, "p"'] & \bullet \arrow[d] \arrow[r] \arrow[l] \arrow[rd, phantom,
"\ulcorner" very near start] & E' \arrow[d, "p'"] \\
B                 & B \arrow[r] \arrow[l, equals]                            & B'                
\end{tikzcd}
\]

Thinking only of the functions $p : E \to B$, we would expect a morphism $p \to
p'$ would be a square as below right, which is equivalent to a diagram
as below right:
\[
\begin{tikzcd}
E \arrow[d, "p"'] \arrow[r] & \bullet \arrow[d] \arrow[r] \arrow[rd, phantom,
"\ulcorner" very near start] & E' \arrow[d, "p'"] &  & E \arrow[d, "p"'] \arrow[r] & E' \arrow[d, "p'"] \\
B \arrow[r, equals]                 & B \arrow[r]                            & B'                 &  & B \arrow[r]                 & B'                
\end{tikzcd}
\]
The shape of this diagram on the right is explained by the vertical/cartesian factorization system on
$\Set^{\down}$ associated to the codomain fibration $\term{cod} : \Set^{\down}
\to \Set$. Namely, the left class of this factorization system consists of all commuting
squares whose bottom face is an isomorphism, and the right class consists of all
pullback squares. The morphisms of polynomials are precisely the spans in
$\Set^{\down}$ whose left leg is vertical and whose right leg is cartesian.

Thinking fiberwise, in terms of indexed categories, a morphism of functions $p \to p'$ can be expressed as a pair $f : B \to B'$ and
a family $f^{\sharp}_b : E_b \to E'_{f(b)}$ for $b \in B$. On the other hand, a morphism of polynomials is given by a pair of a function $f : B \to B'$ and a family $f^{\sharp}_b :
E'_{f(b)} \to E_b$ for $b \in B$. We can see, therefore, that forming these
spans of squares whose left leg is vertical and right leg is cartesian (the
polynomial morphisms) corresponds to taking the fiberwise opposite. We will see
that this construction works generally.

Every Grothendieck fibration $p : \Ea \to \Ba$ gives rise to such an orthogonal
factorization system, where the vertical maps are those sent to isomorphisms by
$p$ and the cartesian maps are those satisfying the cartesian lifting property.
We will show that we can perform a construction analogous to the construction of
the category of polynomials as spans whose left leg is vertical and whose right
leg is cartesian from the codomain fibration $\term{cod} :
\Set^{\down} \to \Set$ for any Grothendieck fibration --- without any extraneous
assumptions on the categories $\Ea$ and $\Ba$.

In this paper, we will see an abstraction of the crucial features of the vertical/cartesian
factorization systems associated to Grothendieck fibrations in the notion of a \emph{cartesian factorization system}: an
orthogonal factorization system $(\epi, \mono)$ satsifying two additional
properties:
\begin{enumerate}
\item The left class $\epi$ satisfies 2-of-3.
\item Pullbacks of the left class $\epi$ along the right class $\mono$ exist and
  are in $\epi$.
\end{enumerate}

Such factorization systems were studied first (to the author's knowledge) in
\cite{cassidy_hébert_kelly_1985} as a special sort of ``reflective factorization system'' (see
Theorem 4.3), with the specific case being called ``semi-left-exact'' reflective
factorization systems, and the connection to Grothendieck fibrations was later made in
\cite{rosicky_tholen_2007} where they are called ``simple reflective factorization systems''.
They have been called ``cartesian factorization systems'' in the
$\infty$-category literature (see \cite{lanari2019cartesian} and \cite{barwick_glasman_nardin_2014}).

Every vertical/cartesian factorization system arising from a Grothendieck
fibration is a cartesian factorization system. While we do not show that every
cartesian factorization system arises this way, we will show that under mild
conditions (such as the existence of a terminal object) they do. In particular,
we will show that if a cartesian factorization system admits enough
$\epi$-injectives (Definition \ref{defn:enough.injectives}), then we can
construct a Grothendieck fibration with a right adjoint right inverse from it.
We prove in Theorem \ref{thm:cartesian.equivalence} that this construction gives
an equivalence of the a 2-category of cartesian factorization systems
with enough injectives and a 2-category of Grothendieck fibrations with right adjoint right inverses.

More precisely, we will show that these factorization systems give rise to
\emph{Street fibrations}, which are the equivalence invariant twin of
Grothendieck fibrations.\footnote{Or, we might say that Grothendieck fibrations
  are the evil twin of Street fibrations.} These differ only in that the lifting
is only up to isomorphism; however, the universal property of cartesian maps
assures that these isomorphisms are unique. See \cite{bourn_shift_1987}, Part II Section 3
Proposition 1 for more detail.

We will then describe the \emph{fiberwise dual} construction, which mimics the
definition of polynomial morphisms in a general cartesian factorization system.
Namely, the fiberwise dual $\Ca^{\vee}$ of a category $\Ca$ equipped with a
cartesian factorization system is the category of spans whose left leg is in the
left class and whose right leg is in the right class. We show that $\Ca^{\vee}$
may be equipped with a cartesian factorization system such that $\Ca^{\vee\vee}
\simeq \Ca$. This gives an involution $(-)^{\vee} : \type{Cart} \to
\type{Cart}$ on the category of cartesian factorization systems (it does not
behave well with natural transformations, although we can rectify this issue by
passing to double categories, as in Theorem \ref{thm:double.functoriality}).
This dual was constructed for $\infty$-categories in \cite{barwick_glasman_nardin_2014}.

We will show in Theorem \ref{thm:fiberwise.op.square} that $(-)^{\vee}$ corresponds to the fiberwise opposite in the
sense that the following square of functors commutes (up to equivalence):
\[
\begin{tikzcd}
\type{Groth} \arrow[d, "(-)^{\text{fibop}}"'] \arrow[r, "\Phi"] & \type{Cart} \arrow[d, "(-)^{\vee}"] \\
\type{Groth} \arrow[r, "\Phi"']         & \type{Cart}            
\end{tikzcd}
\]
where $(-)^{\text{fibop}} : \type{Groth} \to \type{Groth}$ is the
fiberwise opposite of a Grothendieck fibration, constructed by passing through
indexed categories. The variance of these functors is rather tricky to extend to the level
of 2-categories; we only consider their action on morphisms and not natural
transformations here.

We then show that the natural double category one might construct out of $\Ea$
and $\Ea^{\vee}$ (in the usual way one makes a double category out of spans) is
equivalent to a \emph{Grothendieck double construction} of the indexed category
$\Ea_{(-)} : \Ba\op \to \Ca$ whose horizontal
category is the Grothendieck construction of $\Ea_{(-)}$ and whose vertical
category is the Grothendieck construction of the pointwise opposite. We will
show in Theorem \ref{thm:double.functoriality} that this construction is
2-functorial, landing in the 2-category of double categories, functors, and
horizontal transformations. This
construction plays a role in a double categorical theory of open dynamical
systems \cite{jaz2020dynamical}.

\begin{acknowledgements}
  The author would like to thank Emily Riehl for her careful reading of a draft
  of this paper, and for her helpful comments. 
The author would also like to thank Jonas
  Frey, Fosco Loregian, Richard Garner, Edoardo Lanari, and Charles Barwick for
  bringing the references of previous work to my attention.
\end{acknowledgements}
\section{Orthogonal Factorization Systems}
First, we recall the definition of an orthogonal factorization system, and a few
elementary lemmas. For proofs, see \cite{riehlfactorization}.
\begin{defn}
A \emph{orthogonal factorization system} $(\epi, \mono)$ on a category $\Ca$ is a
pair of collections of arrows $\bullet \epi \bullet$ and $\bullet \mono \bullet$ such that
\begin{enumerate}
\item $\epi$ and $\mono$ both contain all isomorphisms and are closed under composition.
\item Every map $f : A \to B$ factors as $f = me$ with $e \in \epi$ and $m \in
  \mono$.
  \[
\begin{tikzcd}
\bullet \arrow[rr] \arrow[rd, two heads, dashed] &                                  & \bullet \\
                                                 & \bullet \arrow[ru, dashed, tail] &        
\end{tikzcd}
  \]
\item This factorization is \emph{uniquely functorial} in the sense that
  whenever we have a solid diagram:
  \[
\begin{tikzcd}
\bullet \arrow[r] \arrow[d, two heads]                & \bullet \arrow[d, two heads] \\
\bullet \arrow[d, tail] \arrow[r, "\exists!", dashed] & \bullet \arrow[d, tail]      \\
\bullet \arrow[r]                                     & \bullet                     
\end{tikzcd}
  \]
  We have a unique dashed arrow making the diagram commute.
\end{enumerate}
\end{defn}
We recall that this definition implies that $\La$ is \emph{orthogonal} to $\Ra$
in the sense that every lifting problem:
\[
\begin{tikzcd}
\bullet \arrow[r] \arrow[d, two heads]           & \bullet \arrow[d, tail] \\
\bullet \arrow[r] \arrow[ru, "\exists!", dashed] & \bullet                
\end{tikzcd}
\]
has a \emph{unique} solution, given by the dashed arrow. We now recall a few
elementary lemmas in the theory of orthogonal factorization systems. 
\begin{lem}[Saturation]\label{lem:saturation}
Let $(\epi, \mono)$ be an orthogonal factorization system. Then a map is in
$\epi$ (resp. $\mono$) if and only if it satisfies the unique left (resp. right)
lifting property against $\mono$s (resp. $\epi$s).
\end{lem}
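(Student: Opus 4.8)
The plan is to prove the characterization only for the left class $\epi$; the statement for $\mono$ then follows by running the identical argument in the opposite category $\Ca\op$, where the factorization system $(\epi, \mono)$ becomes $(\mono\op, \epi\op)$ and a unique left lifting property turns into a unique right lifting property. One direction is free: if $f \in \epi$ then $f$ has the unique left lifting property against every $m \in \mono$, since this is exactly the orthogonality property recalled immediately above the lemma. So all the content is in the converse.

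For the converse, suppose $f : A \to B$ enjoys the unique left lifting property against every map in $\mono$. First I would factor $f = m e$ with $e \in \epi$ and $m \in \mono$, using axiom (2). The goal then reduces to showing that $m$ is an isomorphism: once $m$ is invertible it lies in $\epi$ (isomorphisms are in $\epi$ by axiom (1)), whence $f = m e$ is a composite of two $\epi$-maps and so lies in $\epi$ by closure under composition.

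To invert $m$, I would first produce a right inverse by feeding the commuting square below into the hypothesis on $f$ (it commutes because $m e = f = \id_B \circ f$):
\[
\begin{tikzcd}
A \arrow[r, "e"] \arrow[d, "f"'] & C \arrow[d, "m"] \\
B \arrow[r, "\id_B"'] \arrow[ru, "s"', dashed] & B
\end{tikzcd}
\]
The resulting diagonal $s : B \to C$ satisfies $s f = e$ and $m s = \id_B$. To obtain the matching left inverse, I would observe that both $\id_C$ and $s m$ solve the orthogonality lifting problem of $e$ against $m$:
\[
\begin{tikzcd}
A \arrow[r, "e"] \arrow[d, "e"'] & C \arrow[d, "m"] \\
C \arrow[r, "m"'] & B
\end{tikzcd}
\]
indeed $\id_C \circ e = e$ and $m \circ \id_C = m$, while $(s m) e = s(m e) = s f = e$ and $m (s m) = (m s) m = m$. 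Since $e \in \epi$ and $m \in \mono$ are orthogonal, the lift is \emph{unique}, forcing $s m = \id_C$. Together with $m s = \id_B$ this exhibits $m$ as an isomorphism, completing the argument.

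The step I expect to be the crux is this last one, the construction of the two-sided inverse. The right inverse $s$ drops out directly from the assumed lifting property of $f$, but establishing that $s$ is simultaneously a left inverse genuinely uses the \emph{uniqueness} clause of orthogonality between $e$ and $m$; mere existence of diagonal fillers would not pin down $sm = \id_C$.
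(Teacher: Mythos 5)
Your proposal is correct: the paper itself gives no proof of this lemma (it defers to the cited reference on factorization systems), and your argument is the standard one found there — factor $f = me$, use the assumed unique left lifting property of $f$ against $m$ to extract a section $s$ of $m$, and then use the uniqueness of the filler in the lifting problem of $e$ against $m$ to force $sm = \id$, so that $m$ is invertible and $f \in \epi$ by closure under composition. You correctly identify the crux as the uniqueness clause of orthogonality, and the reduction of the $\mono$ case to the $\epi$ case by passing to the opposite category is also sound.
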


\begin{lem}[Cancellation]
Let $(\epi, \mono)$ be an orthogonal factorization system. For any composable
arrows $f$ and $g$:
\begin{itemize}
\item if $f$ and $gf$ are in $\epi$, then so is $g$, and
  \item if $g$ and $gf$ are in $\mono$, then so is $f$.
\end{itemize}
\end{lem}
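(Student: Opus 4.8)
The plan is to prove the first cancellation statement directly from the Saturation Lemma and then obtain the second by duality. Fix composable $f : A \to B$ and $g : B \to C$ with $f, gf \in \epi$; by \cref{lem:saturation} it suffices to exhibit, for every $m \in \mono$, a unique solution to each lifting problem of $g$ against $m$.

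Consider a commuting square with $g$ on the left and $m : X \to Y$ in $\mono$ on the right, given by maps $u : B \to X$ and $v : C \to Y$ with $mu = vg$. First I would precompose with $f$: since $gf \in \epi$ and $m \in \mono$, the outer square with $gf$ on the left has a unique diagonal filler $w : C \to X$ satisfying $w(gf) = uf$ and $mw = v$. This $w$ is the candidate solution, and the two things to check are that $wg = u$ and that $w$ is the only such solution.

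The key step is verifying $wg = u$, and here I would use $f \in \epi$. Both $u$ and $wg$ are maps $B \to X$; precomposing with $f$ gives $uf$ in both cases (using $w(gf) = uf$), and postcomposing with $m$ gives $vg$ in both cases (using $mu = vg$ and $mw = v$). Thus $u$ and $wg$ are both diagonal fillers of the square with $f \in \epi$ on the left, $m \in \mono$ on the right, top $uf$, and bottom $vg$; by orthogonality this filler is unique, so $wg = u$. Uniqueness of $w$ itself is then immediate: any solution $w'$ with $w'g = u$ and $mw' = v$ also satisfies $w'(gf) = (w'g)f = uf$, hence agrees with $w$ by the uniqueness of the first filler. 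This shows $g$ has the unique left lifting property against every $\mono$, so $g \in \epi$.

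The second statement is the formal dual of the first. The pair $(\mono\op, \epi\op)$ is an orthogonal factorization system on $\Ca\op$, in which the roles of the two classes and the direction of composition are reversed; applying the already-proven statement there yields exactly the claim that $g, gf \in \mono$ implies $f \in \mono$. I expect the only subtlety to be bookkeeping the variance correctly in this dualization, since all the geometric content is already contained in the diagonal-filler argument above.
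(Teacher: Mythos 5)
Your proof is correct. The paper does not actually supply a proof of this lemma (it defers to the cited notes on factorization systems), and your argument --- reducing to the unique-lifting characterization of $\epi$ from Lemma \ref{lem:saturation}, obtaining the candidate filler $w$ from the square for $gf$, and then using orthogonality of $f$ against $m$ to force $wg = u$ --- together with the dual bookkeeping for the $\mono$ half, is exactly the standard argument the paper is implicitly invoking.
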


\begin{lem}
Let $(\epi, \mono)$ be an orthogonal factorization system. Then the right class
$\mono$ is preserved under pullback.
\end{lem}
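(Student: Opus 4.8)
The plan is to invoke the Saturation lemma (\Cref{lem:saturation}) and verify that a pullback of a $\mono$-map again satisfies the \emph{unique} right lifting property against every $\epi$-map; this is cleaner than trying to exhibit a factorization of an arbitrary arrow directly. So suppose $m : X \to Y$ lies in $\mono$, fix any $g : Z \to Y$, and form the pullback $P = Z \times_Y X$ with projections $\pi : P \to X$ and $m' : P \to Z$ satisfying $m\pi = g m'$. The goal is to show $m' \in \mono$.

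To that end, I would take an arbitrary $e : A \to B$ in $\epi$ together with a lifting problem $u : A \to P$ and $v : B \to Z$ satisfying $m' u = v e$, and first push it forward along $\pi$. The pair $\pi u : A \to X$ and $g v : B \to Y$ forms a commuting square against $m$, since $m(\pi u) = g m' u = g v e = (gv)e$. Because $e \in \epi$ and $m \in \mono$ are orthogonal, this square admits a \emph{unique} diagonal filler $d : B \to X$ with $d e = \pi u$ and $m d = g v$.

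Next I would feed $d$ back into the pullback. The maps $d : B \to X$ and $v : B \to Z$ satisfy $m d = g v$, so the universal property of $P$ yields a unique $w : B \to P$ with $\pi w = d$ and $m' w = v$, which is the candidate filler for the lower leg. The step I expect to require the most care is verifying that $w$ also fills the upper triangle, i.e.\ that $w e = u$, since this is where the two universal properties must be made to cooperate: one computes $\pi(we) = d e = \pi u$ and $m'(we) = v e = m' u$, so $we$ and $u$ agree after composing with both projections of the pullback, whence $w e = u$ by the uniqueness clause of the pullback's universal property.

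Finally, uniqueness of $w$ among fillers follows by the same bookkeeping. Any competitor $w'$ with $w' e = u$ and $m' w' = v$ has $\pi w'$ solving the orthogonality problem for $m$ (as $(\pi w')e = \pi u = d e$ and $m(\pi w') = g m' w' = g v = m d$), forcing $\pi w' = d$; combined with $m' w' = v$ this gives $w' = w$ by pullback uniqueness once more. Thus $m'$ enjoys the unique right lifting property against all of $\epi$, so $m' \in \mono$ by \Cref{lem:saturation}, establishing that $\mono$ is stable under pullback.
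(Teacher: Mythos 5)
Your proof is correct: the paper itself gives no argument for this lemma (it defers to the cited reference on factorization systems), and your route via Lemma \ref{lem:saturation} --- showing the pulled-back map inherits the unique right lifting property against $\epi$ by combining the filler for the pushed-forward square with the universal property of the pullback --- is exactly the standard argument that reference would supply. All the verifications, including the uniqueness step, check out.
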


\begin{lem}
  Let $(\epi, \mono)$ be an orthogonal factorization system. Then a map is an
  isomorphism if and only if it is in both $\epi$ and $\mono$.
\end{lem}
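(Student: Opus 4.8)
The plan is to prove the two directions separately, with essentially all of the content lying in the ``if'' direction. First I would dispatch the ``only if'' direction immediately: if $f$ is an isomorphism, then by axiom (1) of an orthogonal factorization system both $\epi$ and $\mono$ contain all isomorphisms, so $f \in \epi$ and $f \in \mono$ with no further argument.

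For the converse, suppose $f : A \to B$ lies in $\epi \cap \mono$. The key idea is to make $f$ orthogonal to \emph{itself} by setting up the lifting problem in which $f$ appears simultaneously as the left (class $\epi$) and the right (class $\mono$) map, filling in the remaining two sides with identities. Concretely, I would consider the commuting square
\[
\begin{tikzcd}
A \arrow[r, "\id"] \arrow[d, "f"', two heads] & A \arrow[d, "f", tail] \\
B \arrow[r, "\id"'] \arrow[ru, "g", dashed] & B
\end{tikzcd}
\]
which commutes since $f \circ \id_A = \id_B \circ f = f$. Because the left leg is in $\epi$ and the right leg is in $\mono$, orthogonality supplies a (unique) diagonal filler $g : B \to A$.

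To finish, I would read off the two triangles: the upper-left triangle gives $g \circ f = \id_A$ and the lower-right triangle gives $f \circ g = \id_B$, so $g$ is a two-sided inverse and $f$ is an isomorphism. The uniqueness clause of orthogonality is not strictly needed for the conclusion, but it guarantees $g$ is well defined and is worth recording. There is no real obstacle here; the only point requiring a moment's thought is recognizing that the correct lifting problem is the ``diagonal'' one with identities on the top and bottom, after which the argument is forced. Alternatively, one could invoke the Saturation lemma (\Cref{lem:saturation}) to observe that a map in both classes satisfies both the left and right lifting properties against itself, but the explicit square above is the most direct route.
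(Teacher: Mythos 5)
Your proof is correct and is precisely the standard argument: the paper omits its own proof of this lemma (deferring to \cite{riehlfactorization}), and the self-orthogonality square with identities on top and bottom, whose diagonal filler yields a two-sided inverse, is exactly the argument given there. Nothing further is needed.
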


\section{Cartesian Factorization Systems}

Now we come to our main definition.
\begin{defn}\label{defn:cartesian.factorization.system}
  An orthogonal factorization system $(\epi, \mono)$ is \emph{cartesian} if
  \begin{enumerate}
    \item (Left 2-of-3) The class $\epi$ satisfies $2$-of-$3$: if $g$ and $gf$ are in $\epi$,
      then so is $f$.
    \item (Right Stability) Pullbacks of $\epi$s along $\mono$s exist and are in $\epi$.
  \end{enumerate}
  We will refer to an orthogonal factorization system that satisfies (2) ---
  Right Stability --- as a right stable orthogonal factorization system.
  
  We define the 2-category $\type{Cart}$ of cartesian factorization systems to
  consist of categories with cartesian factorization systems, functors which
  preserve both classes, and natural transformations between such functors. 
\end{defn}

In this section, we will see a few basic properties of cartesian factorization
systems, and then relate them to Grothendieck fibrations. We will end the
section by seeing that certain cartesian factorization systems are equivalent to
Grothendieck fibrations with a right adjoint right inverse. 

\begin{lem}\label{lem:cartesian.fact.pullback.square}
  Let $(\epi, \mono)$ be a cartesian factorization system on a category $\Ca$.
  Then every square of the form
\[
\begin{tikzcd}
\bullet \arrow[d, two heads] \arrow[r, tail]  & \bullet \arrow[d, two heads] \\
\bullet \arrow[r, tail]                                 & \bullet                     
\end{tikzcd}
\]
is a pullback.
\end{lem}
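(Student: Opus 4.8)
The plan is to construct the relevant pullback abstractly from the Right Stability axiom and then show that the comparison map out of the top-left corner is an isomorphism. Label the corners of the square $A$ (top-left), $B$ (top-right), $C$ (bottom-left), $D$ (bottom-right), and write the maps as $e \colon A \to C$ and $e' \colon B \to D$ for the two members of $\epi$, and $m \colon A \to B$ and $m' \colon C \to D$ for the two members of $\mono$; commutativity of the square reads $m' \circ e = e' \circ m$.

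First I would apply Right Stability to form the pullback $P$ of $e' \colon B \to D$ along $m' \colon C \to D$, with projections $\pi_C \colon P \to C$ and $\pi_B \colon P \to B$. Since $\pi_C$ is the pullback of the $\epi$-map $e'$ along the $\mono$-map $m'$, Right Stability gives $\pi_C \in \epi$; since $\pi_B$ is the pullback of the $\mono$-map $m'$, the lemma that $\mono$ is stable under pullback gives $\pi_B \in \mono$. Because the original square commutes, the universal property of $P$ supplies a unique comparison map $u \colon A \to P$ satisfying $\pi_C \circ u = e$ and $\pi_B \circ u = m$. It then suffices to prove that $u$ is an isomorphism, and for this I would use the characterization of isomorphisms as exactly the maps lying in both $\epi$ and $\mono$.

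The two memberships come from the two cancellation-type properties applied in complementary directions. From $\pi_C \circ u = e$ with $\pi_C, e \in \epi$, the Left 2-of-3 axiom yields $u \in \epi$; from $\pi_B \circ u = m$ with $\pi_B, m \in \mono$, the Cancellation lemma (second clause) yields $u \in \mono$. Hence $u$ is an isomorphism and the given square is a pullback. There is no serious obstacle in this argument; the only point requiring care is the bookkeeping of variance, namely that it is $\pi_C$ (the $\epi$-projection) that feeds the 2-of-3 step while $\pi_B$ (the $\mono$-projection) feeds the cancellation step, and not the reverse. It is worth emphasizing that this is the unique place where Left 2-of-3 is actually used: without it one cannot promote $u$ into $\epi$, and the conclusion genuinely fails. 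For instance, the ordinary $(\text{epi},\text{mono})$ factorization system on $\Set$ is right stable but not left $2$-of-$3$, and the square $\{1\} \to \{1,2\} \rightrightarrows \{1\}$ (with the two vertical maps the unique surjections onto $\{1\}$) has surjective verticals and injective horizontals yet is not a pullback.
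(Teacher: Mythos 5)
Your argument is correct and is essentially the paper's own proof: form the pullback granted by Right Stability, note its projections lie in $\epi$ and $\mono$ respectively, and show the comparison map is an isomorphism by putting it in $\epi$ via Left 2-of-3 and in $\mono$ via cancellation for the right class. The added counterexample in $\Set$ illustrating the necessity of Left 2-of-3 is a correct and worthwhile observation, but not part of the proof itself.
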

\begin{proof}
  Consider the comparison map to the pullback:
  \[
\begin{tikzcd}
\bullet \arrow[rdd, two heads, bend right] \arrow[rrd, tail, bend left] \arrow[rd, dashed] &                                                         &                              \\
                                                                                           & \bullet \arrow[d, two heads] \arrow[r, tail] \arrow[rd, phantom, "\ulcorner" very near start] & \bullet \arrow[d, two heads] \\
                                                                                           & \bullet \arrow[r, tail]                                 & \bullet                     
\end{tikzcd}
  \]
  The comparison map is in $\mono$ by cancelation for the right class, and it is
  is in $\epi$ because $\epi$ satisfies 2-of-3 by hypothesis. Therefore, it is
  an isomorphism, and the outer square is a pullback.
\end{proof}

\begin{rmk}
  A \emph{modality} is an orthogonal factorization system on a category with
  finite limits in which the left class is stable under pullback. Clearly, a
  modality is in particular a right stable orthogonal factorization system. A
  modality is \emph{lex} if and only if the left class satisfies 2-of-3; that
  is, if and only if it is a cartesian factorization system, in addition to a modality.
\end{rmk}

This definition is meant to abstract some crucial features of the
vertical/cartesian orthogonal factorization system associated to a Grothendieck fibration.

\begin{prop}\label{prop:groth.fib.cartesian.fact}
  Let $p : \Ea \to \Ba$ be a Grothendieck fibration. The induced
  vertical/cartesian orthogonal factorization system $(\epi, \mono)$ is
  cartesian. This gives a 2-functor $\Phi : \type{Groth} \to \type{Cart}$ from the
  2-category of Grothendieck fibrations to the 2-category of cartesian
  factorization systems.
\end{prop}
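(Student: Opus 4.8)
The plan is to verify the two defining conditions of \Cref{defn:cartesian.factorization.system} for the vertical/cartesian pair $(\epi, \mono)$ on $\Ea$, where $\epi$ is the class of maps sent by $p$ to an isomorphism and $\mono$ is the class of cartesian maps. That this pair is an orthogonal factorization system is classical: given $f$, one lifts $p(f)$ to a cartesian arrow with the same codomain and fills in the induced vertical map, and orthogonality is exactly the universal property of cartesian arrows. It then remains to check (1) Left 2-of-3 and (2) Right Stability.

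Condition (1) is immediate from functoriality of $p$ together with the fact that the isomorphisms of $\Ba$ satisfy 2-of-3. If $g$ and $gf$ lie in $\epi$, then $p(g)$ and $p(gf) = p(g)\circ p(f)$ are isomorphisms, so $p(f) = p(g)^{-1}\circ p(gf)$ is an isomorphism and $f$ lies in $\epi$.

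The substance of the argument is condition (2). Let $m \colon b \to a$ be cartesian over $\mu \coloneqq p(m)$ and let $e \colon c \to a$ be vertical, so $p(e)$ is an isomorphism. Setting $\beta \coloneqq p(e)^{-1}\circ\mu$, I would first form the cartesian lift $\kappa \colon \beta^{\ast}c \to c$ of $\beta$ at $c$, which exists since $p$ is a fibration; then $e\circ\kappa$ lies over $p(e)\circ\beta = \mu$, so the universal property of the cartesian arrow $m$ produces a unique vertical $w \colon \beta^{\ast}c \to b$ with $m\circ w = e\circ\kappa$. The resulting commuting square, with top $\kappa$ and left leg $w$, has vertical left leg by construction; it remains to show it is a pullback, which is exactly what Right Stability requires. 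To that end, given a test object $T$ with maps $s \colon T \to c$ and $s' \colon T \to b$ satisfying $e\circ s = m\circ s'$, applying $p$ shows that $p(s)$ factors as $\beta\circ p(s')$, so the cartesian property of $\kappa$ supplies a unique mediating $u$ over $p(s')$ with $\kappa\circ u = s$; a second appeal to the \emph{uniqueness} clause of the cartesian property of $m$ then forces $w\circ u = s'$, and uniqueness of $u$ follows by the same token. The only thing to watch throughout is the bookkeeping of which objects and arrows lie over which in $\Ba$, since that is precisely the hypothesis each cartesian universal property consumes.

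Finally, for the $2$-functor $\Phi$, I would set $\Phi(p) = (\Ea, \epi, \mono)$ on objects, send a morphism of fibrations to its underlying functor $F$, and send a $2$-cell to its underlying natural transformation. The content is that $F$ preserves both classes: it preserves $\mono$ because a morphism of fibrations is by definition a cartesian-arrow-preserving functor, and it preserves $\epi$ because $F$ sits in a square over a base functor $G$, whence a vertical $f$ (with $p(f)$ invertible) satisfies $p'(Ff) = G(p(f))$ invertible, using only that functors preserve isomorphisms. As $\Phi$ acts as the identity on the underlying functor and natural-transformation data, preservation of identities and of both compositions is immediate, so $\Phi$ is a $2$-functor. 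I expect the pullback verification in (2) to be the only genuine obstacle; the remaining clauses are either classical or direct consequences of functoriality.
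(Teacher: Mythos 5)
Your proposal is correct and follows essentially the same route as the paper: the pullback of a vertical map along a cartesian map is built by taking a cartesian lift of $p(e)^{-1}\circ p(m)$ into the domain of the vertical map, inducing the vertical leg from the universal property of $m$, and verifying the pullback property by two appeals to cartesianness, exactly as in the paper's argument (your $\kappa$, $w$ are the paper's $k$, $j$). The treatment of 2-of-3 and of the 2-functoriality of $\Phi$ likewise matches the paper's.
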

\begin{proof}
The vertical morphisms satisfy 2-of-3 since they are, by definition, those
morphisms sent by $p$ to isomorphisms. It remains to show, then, that pullbacks
of vertical maps over cartesian maps exist and are vertical. Suppose we have a
diagram as follows:
\[
\begin{tikzcd}
                        & B \arrow[d, "g", two heads] \\
A \arrow[r, "f"', tail] & C                          
\end{tikzcd}
\]
We may complete this into a square:
\[
\begin{tikzcd}
(pf)^{\ast}B \arrow[r, tail, "k"] \arrow[d, two heads, dashed, "j"] & B \arrow[d, "g", two heads] \\
A \arrow[r, "f"', tail]                                   & C                          
\end{tikzcd}
\]
We do this in the usual way that one defines the base change operation on
fibers: namely, we take $k$ to be a cartesian lift of $(pg)\inv (pf)$ and $j$ is
the unique morphism determined by the universal property of $f$. We note that
$pj = \id_{pA}$, so that it is vertical. It remains to
show that this square is a pullback.

Given a solid diagram like so:
\[
\begin{tikzcd}
Z \arrow[rdd, bend right] \arrow[rrd, bend left] \arrow[rd, dashed] &                                                                      &                             \\
                                                                    & (pf)^{\ast}B \arrow[r, "k", tail] \arrow[d, "j"', two heads, dashed] & B \arrow[d, "g", two heads] \\
                                                                    & A \arrow[r, "f"', tail]                                              & C                          
\end{tikzcd}
\]
The dashed arrow exists uniquely by the universal property of $k$, using $(pj)\inv$ to
create the triangle in $\Ba$ to lift. 

We can quickly show that this assembles into a 2-functor. Given a cartesian
functor $p \to q$:
\[
\begin{tikzcd}
\Ea \arrow[d, "p"'] \arrow[r, "F"] & \Ea' \arrow[d, "q"] \\
\Ba \arrow[r, "G"']                & \Ba'               
\end{tikzcd}
\]
$F$ preserves cartesian morphisms by hypothesis, and we can see that $F$ preserves vertical morphisms by the commutativity, up to
isomorphism, of this square.
\end{proof}

There is, in fact, a strong relation between cartesian factorization systems and
Grothendieck fibrations. We will now explore this relationship, beginning with a
few preliminary definitions and lemmas.
\begin{defn}\label{defn:enough.injectives}
Let $(\epi, \mono)$ be an orthogonal factorization system. An object $X$ is
\emph{$\epi$-injective} (or just, \emph{injective}) if every extension problem like so:
\[
\begin{tikzcd}
\bullet \arrow[r] \arrow[d, two heads]  & X \\
\bullet \arrow[ru, "\exists!"', dashed] &  
\end{tikzcd}
\]
admits a unique solution. We say that $(\epi, \mono)$ \emph{has enough $\epi$-injectives} if for every $X$ there is a map $X \epi Y$ (in $\epi$) into an
injective object $Y$.
\end{defn}

If our category $\Ca$ has a terminal object, then this theory of injective
objects becomes particularly simple.
\begin{lem}
If $\Ca$ has a terminal object, then an object $X$ is injective if and only if
the terminal map $X \mono \ast$ is in the right class.
\end{lem}

\begin{rmk}
A modality, which is a stable orthogonal factorization system on a category with
finite limits, has enough $\epi$-injectives given by factoring the terminal
morphism. In this case, we call the operation of injective replacement the
\emph{modal operator}.
\end{rmk}

However, all of the basic facts which follow immediately from this
characterization of injective objects --- that every map between them is in the
right class $\mono$, that if $Y$ is injective and $X \mono Y$ then so is $X$ ---
follow as well without the presence of a terminal object.
\begin{lem}\label{lem:maps.between.injective.right}
Every map between injective objects is in the right class $\mono$.
\end{lem}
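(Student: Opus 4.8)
The plan is to prove that every map $f : X \to Y$ between $\epi$-injective objects lies in the right class $\mono$. The natural strategy is to use the orthogonality characterization from the Saturation lemma (Lemma \ref{lem:saturation}): a map is in $\mono$ if and only if it satisfies the unique \emph{right} lifting property against every map in $\epi$. So I would reduce the claim to solving an arbitrary lifting problem of the form below, where $e : A \epi B$ is in the left class.

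Let me spell out the key steps. First I would set up a lifting problem with $e \in \epi$ on the left and our map $f : X \to Y$ on the right:
\[
\begin{tikzcd}
A \arrow[r, "a"] \arrow[d, "e"', two heads] & X \arrow[d, "f"] \\
B \arrow[r, "b"'] \arrow[ru, dashed]        & Y
\end{tikzcd}
\]
and I must produce a unique diagonal filler $B \to X$. The idea is to exploit injectivity of $X$: the top-left composite $a : A \to X$ together with the map $e : A \epi B$ forms exactly an extension problem of the kind appearing in Definition \ref{defn:enough.injectives}, since $X$ is injective. This yields a \emph{unique} map $h : B \to X$ with $h e = a$. The remaining work is to check that this $h$ is genuinely a solution to the original lifting problem — that is, that $f h = b$ — and that it is the unique such solution.

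For commutativity of the lower triangle, I would observe that both $f h$ and $b$ are maps $B \to Y$ that agree after precomposition with $e$: indeed $(f h) e = f (h e) = f a = b e$ using the commutativity of the original square. Since $e \in \epi$, I want to conclude $f h = b$. This is where I expect the main subtlety to lie: I cannot simply cancel $e$ as though it were an epimorphism in the naive sense, but I can invoke injectivity of the \emph{codomain}. Because $Y$ is also injective, the extension problem posed by $b e = (fh) e$ along $e$ into $Y$ has a unique solution, and both $fh$ and $b$ solve it; hence $fh = b$. The same injectivity-of-$X$ argument that produced $h$ simultaneously guarantees its uniqueness as a filler, since any diagonal $B \to X$ making the upper triangle commute is by definition a solution to the extension problem into the injective object $X$, which has a unique solution. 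Having verified existence and uniqueness of the filler against an arbitrary $e \in \epi$, Saturation then gives $f \in \mono$, completing the proof.
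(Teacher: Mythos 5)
Your proof is correct and follows essentially the same route as the paper's: produce the filler from injectivity of $X$, verify the lower triangle using injectivity (hence uniqueness of extensions) of $Y$, and get uniqueness of the filler because any diagonal solving the lifting problem solves the extension problem into $X$. The only cosmetic difference is that you state the uniqueness step in the (correct) direction ``lifting solution $\Rightarrow$ extension solution,'' which is slightly cleaner than the paper's phrasing.
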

\begin{proof}
We show that if $X$ and $Y$ are injective and $f : X \to Y$, then every lifting
problem
\begin{equation}\label{diag:injective.lifting1}
\begin{tikzcd}
A \arrow[r] \arrow[d, two heads]           & X \arrow[d, "f"] \\
B \arrow[r] \arrow[ru, "\exists!", dashed] & Y               
\end{tikzcd}
\end{equation}
admits a unique solution, so that $f$ is in $\mono$ by the saturation of that
class. Since $X$ is injective, there is a unique solution to the following
extension problem:
\begin{equation}\label{diag:injective.lifting2}
\begin{tikzcd}
A \arrow[r] \arrow[d, two heads] & X \\
B \arrow[ru, "\exists!", dashed] &  
\end{tikzcd}
\end{equation}
We just need to show that this solves the above lifting problem. Since $Y$ is
also injective, there is a unique solution to the following extension problem
which is already given by the map $B \to Y$:
\[
\begin{tikzcd}
A \arrow[r] \arrow[d, two heads]             & X \arrow[r, "f"] & Y \\
B \arrow[ru, "\exists!", dashed] \arrow[rru] &                  &  
\end{tikzcd}
\]
therefore the composite $B \to X \to Y$ is equal to the map $B \to Y$.
Uniqueness of this result follows because, as we just saw, any solution of the
extension problem
(\ref{diag:injective.lifting2}) gives a solution of the lifting problem (\ref{diag:injective.lifting1}).
\end{proof}

\begin{lem}\label{lem:injective.closed.under.right}
If $Y$ is injective and $k : X \mono Y$ is a map in the right class, then $X$ is injective. 
\end{lem}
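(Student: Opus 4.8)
The plan is to verify directly that $X$ satisfies the unique extension property of Definition~\ref{defn:enough.injectives}. So I would begin with an arbitrary extension problem into $X$: a map $a : A \to X$ together with a left map $e : A \epi B$, and aim to produce a \emph{unique} $\ell : B \to X$ with $\ell e = a$. The guiding idea is that injectivity of $X$ should be detectable from injectivity of $Y$ together with orthogonality against the mono $k$.

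First I would transport the problem into $Y$ along $k$. Composing gives a map $k a : A \to Y$, and since $Y$ is injective the extension problem posed by $ka$ along $e$ has a unique solution $s : B \to Y$, that is, $s e = k a$. This produces a commuting square
\[
\begin{tikzcd}
A \arrow[r, "a"] \arrow[d, "e"', two heads] & X \arrow[d, "k", tail] \\
B \arrow[r, "s"'] & Y
\end{tikzcd}
\]
in which both composites equal $ka$. Because $e \in \epi$ and $k \in \mono$, the orthogonality of the factorization system (the unique diagonal filler recalled just after the definition) supplies a unique $\ell : B \to X$ with $\ell e = a$ and $k \ell = s$. The first of these equations exhibits $\ell$ as a solution to the original extension problem into $X$.

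For uniqueness, I would suppose $\ell'$ is any other map with $\ell' e = a$. Then $k \ell'$ extends $ka$ along $e$, so by the uniqueness of $s$ (injectivity of $Y$) we conclude $k \ell' = s$; hence $\ell'$ is also a diagonal filler of the displayed orthogonality square, and uniqueness of that filler forces $\ell' = \ell$. I do not expect any genuine obstacle here: the only point requiring care is threading the two separate uniqueness statements — the one coming from injectivity of $Y$ and the one coming from orthogonality against $k$ — together in the correct order, which the argument above does.
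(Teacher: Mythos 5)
Your proof is correct and follows essentially the same route as the paper's: push the extension problem into $Y$ via $k$, use injectivity of $Y$ to get the unique map $B \to Y$, then use orthogonality of $e$ against $k$ to obtain the unique diagonal filler. Your explicit handling of the uniqueness step (threading the two uniqueness statements together) is a slightly more careful rendering of what the paper leaves implicit, but it is the same argument.
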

\begin{proof}
Consider an extension problem of the following form, for which we will produce a unique
solution:
\[
\begin{tikzcd}
A \arrow[r, "g"] \arrow[d, "f"', two heads] & X \\
B                                           &  
\end{tikzcd}
\]
Now since $Y$ is injective, we have a unique solution to this extension problem:
\[
\begin{tikzcd}
\bullet \arrow[r, "g"] \arrow[d, "f"', two heads] & X \arrow[r, "k", tail] & Y \\
\bullet \arrow[rru, "\exists!"', dashed]          &                        &  
\end{tikzcd}
\]
Re-arranging, we get a square
\[
\begin{tikzcd}
\bullet \arrow[r, "g"] \arrow[d, "f"', two heads]                     & X \arrow[d, "k", tail] \\
\bullet \arrow[r, "\exists!"', dashed] \arrow[ru, "\exists!", dashed] & Y                     
\end{tikzcd}
\]
which has a unique filler by orthogonality. Therefore, we have a unique solution
to the extension problem we started with.
\end{proof}

\begin{prop}
Let $\Ca_R$ denote the full subcategory of $\epi$-injective objects in an orthogonal
factorization system $(\epi, \mono)$. Then $(\epi, \mono)$ has enough injectives if and only if the inclusion $\Ca_R \hookrightarrow \Ca$ has a left
adjoint.
\end{prop}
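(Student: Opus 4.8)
The plan is to recognize that an $\epi$-injective object is exactly an object admitting unique extensions along every map in $\epi$, so that $\Ca_R$ is the candidate reflective subcategory and ``enough $\epi$-injectives'' is precisely the statement that every object admits a reflection into $\Ca_R$ \emph{whose unit lies in $\epi$}. Accordingly, I would prove the two implications separately, and the whole subtlety lives in matching the unit of a reflection with a map in the left class.

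For the forward direction (enough injectives $\Rightarrow$ left adjoint exists), I would argue pointwise. Given $X$, the hypothesis of Definition~\ref{defn:enough.injectives} supplies a map $e_X : X \epi RX$ in $\epi$ with $RX$ injective. I claim $(RX, e_X)$ is a universal arrow from $X$ to the inclusion $\Ca_R \hookrightarrow \Ca$: for any injective $Z$ and any $f : X \to Z$, injectivity of $Z$ gives a \emph{unique} $g : RX \to Z$ with $g\,e_X = f$, precisely because $e_X \in \epi$. This is exactly the universal property of a reflection, and by the standard pointwise criterion for the existence of adjoints, these reflections assemble into a left adjoint to the inclusion, with unit $e_X$.

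For the reverse direction (left adjoint $\Rightarrow$ enough injectives), let $L$ be the left adjoint with unit $\eta_X : X \to LX$; since $L$ lands in $\Ca_R$, the object $LX$ is injective. The entire content, and the step I expect to be the main obstacle, is to show that $\eta_X \in \epi$, since a priori the unit of a reflection need not lie in the left class. I would factor $\eta_X = m\,e$ with $e : X \epi W$ and $m : W \mono LX$. Because $LX$ is injective and $m \in \mono$, Lemma~\ref{lem:injective.closed.under.right} shows that $W$ is injective as well. The universal property of $\eta_X$, applied to the map $e : X \to W$ into the injective object $W$, then yields a unique $h : LX \to W$ with $h\,\eta_X = e$. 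Comparing $m\,h$ with $\id_{LX}$ (both precompose with $\eta_X$ to give $\eta_X$) forces $m\,h = \id_{LX}$ by uniqueness in the reflection, while comparing $h\,m$ with $\id_W$ (both precompose with $e$ to give $e$) forces $h\,m = \id_W$ by the injectivity of $W$. Hence $m$ is an isomorphism, so $\eta_X = m\,e$ is the composite of a map in $\epi$ with an isomorphism and therefore lies in $\epi$.

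This exhibits, for each $X$, a map $\eta_X \in \epi$ into an injective object $LX$, which is exactly ``enough $\epi$-injectives,'' completing the equivalence. The crux throughout is the coupling between the factorization system and Lemma~\ref{lem:injective.closed.under.right}: it is only knowing that the intermediate object $W$ is again injective that lets the reflection's universal property close the loop and collapse $m$ to an isomorphism. (One could alternatively phrase the forward direction via the saturation characterization of $\epi$ in Lemma~\ref{lem:saturation}, but the pointwise universal-arrow argument is the most direct.)
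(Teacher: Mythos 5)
Your proof is correct and follows essentially the same route as the paper: the forward direction identifies the injective replacement $X \epi RX$ as the universal arrow (the paper phrases this as an initial object of $X \down \Ca_R$), and the reverse direction factors the unit, invokes Lemma \ref{lem:injective.closed.under.right} to see the middle object is injective, and collapses the $\mono$ part to an isomorphism. The only cosmetic difference is at the last step, where you verify both composites are identities directly from the two unique-extension properties, while the paper shows the endomorphism of the middle object lies in both classes and is hence invertible.
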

\begin{proof}
  Suppose that $(\epi, \mono)$ has enough injectives. We will show that the slice category $X
  \down \Ca_R$ of $X$ over the injective objects admits an initial object.
  Suppose that $X \epi Y$ is an injective replacement of $X$. Then the unique
  extension condition of injective objects says that this is initial in $X \down
  \Ca_R$. 

  Now suppose that $X \down \Ca_R$ has an initial object $X \to Y$. We will show
  that this is in the left class, and is therefore an injective replacement of
  $X$. Let $X \epi M \mono Y$ be the factorization of $X \to Y$, seeking to show
  that $M \mono Y$ is an isomorphism. Note that
  by Lemma \ref{lem:injective.closed.under.right}, $M$ is also injective.
  Therefore, by the initiality of $X \to Y$, we have a unique map $Y \to M$
  under $X$. Since $X \to Y$ is initial, the induced endomorphism $Y \to M
  \mono Y$ is the identity. On the other hand, by cancellation for $\epi$, the
  endomorphism $M \mono Y \to M$ is in $\epi$; but by Lemma
  \ref{lem:maps.between.injective.right} maps between injective objects are in
  $\mono$, and therefore this endomorphism is an isomorphism. It follows that $M
  \mono Y$ is an isomorphism as well. 
\end{proof}

\begin{prop}\label{prop:cartesian.fact.groth.fib}
Suppose that $(\epi, \mono)$ is a right stable orthogonal factorization system
on $\Ca$ which has enough injectives. Then the injective replacement functor $R : \Ca \to
\Ca_R$ is a Grothendieck fibration.

Furthermore, $(\epi, \mono)$ is cartesian if
and only if and $\epi$ is the class of
vertical morphisms and $\mono$ the class of cartesian morphisms for this
Grothendieck fibration.
\end{prop}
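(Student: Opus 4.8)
The plan is to produce the cartesian lifts for $R$ by hand as Right‑Stable pullbacks against the reflection unit, and then to read off when the two classes of the factorization system coincide with the vertical and cartesian maps. Write $r_X : X \epi RX$ for the unit of the reflection $R$ (the injective replacement of $X$), which lies in $\epi$; as in the proof of the previous proposition, \emph{any} map in $\epi$ whose codomain is injective exhibits a reflection, so $R$ is computed by such factorizations. By Lemma~\ref{lem:maps.between.injective.right} every morphism of $\Ca_R$ lies in $\mono$ when viewed in $\Ca$, so a map $f : B \to RX$ in the base is in particular a $\mono$.

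First I would build the fibration. Given $X$ and $f : B \to RX$, form the pullback of $r_X$ along $f$:
\[
\begin{tikzcd}
X' \arrow[r, "\phi"] \arrow[d, "q"', two heads] & X \arrow[d, "r_X", two heads] \\
B \arrow[r, "f"', tail] & RX
\end{tikzcd}
\]
Right Stability supplies this pullback with $q \in \epi$, and since $B$ is injective $q$ exhibits $B$ as $RX'$; comparing $r_X\phi = fq$ with the defining equation $R\phi\,q = r_X\phi$ and cancelling $q$ gives $R\phi = f$, so $\phi$ is a genuine (strict) lift. To see $\phi$ is cartesian, take $\psi : Z \to X$ and $v : RZ \to B$ with $fv = R\psi$. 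Naturality of $r$ gives $r_X\psi = R\psi\,r_Z = fvr_Z$, so $\psi$ and $vr_Z$ agree over $RX$ and induce a unique $\chi : Z \to X'$ with $\phi\chi = \psi$ and $q\chi = vr_Z$. Since $q = r_{X'}$, the identity $r_{X'}\chi = R\chi\,r_Z$ reads $vr_Z = R\chi\,r_Z$; cancelling the reflection $r_Z$ forces $R\chi = v$, and uniqueness of $\chi$ is immediate from the pullback. This presents $R$ as a Grothendieck fibration using only Right Stability and enough injectives --- crucially, \emph{not} Left $2$-of-$3$.

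For the equivalence, the direction assuming $\epi$ is the vertical class is immediate: vertical maps are exactly those sent by $R$ to isomorphisms, and that class always satisfies $2$-of-$3$, so with Right Stability $(\epi,\mono)$ is cartesian. Conversely, assume $(\epi,\mono)$ is cartesian. That $\epi$ equals the vertical class I would get as follows: if $\phi \in \epi$ then $r_X\phi \in \epi$ is a reflection of its source, so $R\phi$ is an isomorphism; and if $R\phi$ is an isomorphism then $r_X\phi = R\phi\,r_W \in \epi$, whence $\phi \in \epi$ by Left $2$-of-$3$ applied to $r_X \in \epi$. For $\mono$ equals the cartesian class, take $m : X' \to X$ in $\mono$ and form the canonical cartesian lift $\phi : L \to X$ of $Rm$ as above, where $q : L \epi RX'$ is the pullback of $r_X$ along $Rm$ and $\phi$ is the pullback of $Rm$, so $\phi \in \mono$ since $\mono$ is pullback‑stable. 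Naturality gives $r_X m = Rm\,r_{X'}$, inducing a unique $c : X' \to L$ with $\phi c = m$ and $qc = r_{X'}$. Cancellation for $\mono$ applied to $m = \phi c$ gives $c \in \mono$, while $qc = r_{X'} \in \epi$ with $q \in \epi$ gives $c \in \epi$ by Left $2$-of-$3$; being in both classes, $c$ is an isomorphism, so $m = \phi c$ is cartesian. Finally, a cartesian $\phi$ factors as $\phi = me$ with $e \in \epi$ vertical and $m \in \mono$ cartesian; cancellation of cartesian morphisms makes $e$ cartesian, hence an isomorphism over an isomorphism, so $\phi = me \in \mono$.

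The main obstacle is the cartesian universal property in the first part: the subtle point is that the filler $\chi$ produced purely by the pullback in $\Ca$ automatically has the prescribed image $v$ under $R$, and this is exactly where cancellation of the reflection $r_Z$ is forced upon us. In the second part the crux is pinpointing where Left $2$-of-$3$ is indispensable --- namely in driving the comparison map $c$ into $\epi$ so that it becomes an isomorphism --- which is precisely the feature separating cartesian from merely right stable factorization systems.
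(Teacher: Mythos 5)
Your proof is correct and follows essentially the same route as the paper: cartesian lifts are constructed as Right-Stable pullbacks of the reflection unit along maps between injectives, and Left 2-of-3 together with cancellation and Lemma~\ref{lem:maps.between.injective.right} identifies $\epi$ with the vertical class. You additionally spell out details the paper leaves implicit (that the constructed filler $\chi$ satisfies $R\chi = v$, and the direct verification that $\mono$ coincides with the cartesian class via the comparison map $c$), which is a welcome completion rather than a different approach.
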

\begin{proof}
We show that every map in $\Ca_R$ has a cartesian lift. That is, suppose we have
a diagram like so:
\[
\begin{tikzcd}
                        & X \arrow[d, "e", two heads] \\
A \arrow[r, "m"', tail] & B                          
\end{tikzcd}
\]
Define the lift of the map $A \mono B$ to be the pullback:
\[
\begin{tikzcd}
X' \arrow[r, "e^{\ast} m", tail] \arrow[d, two heads] \arrow[rd, phantom, "\ulcorner" very near start] & X \arrow[d, "e", two heads] \\
A \arrow[r, "m"', tail]                                          & B                          
\end{tikzcd}
\]
That this is cartesian follows quickly from the universal property of the pullback:
\[
\begin{tikzcd}
Z \arrow[d, two heads] \arrow[rrd, bend left] \arrow[rd, dashed] &                                                                  &                             \\
RZ \arrow[rd, tail, bend right] \arrow[rrd]                      & X' \arrow[r, "e^{\ast} m", tail] \arrow[d, crossing over, two heads] \arrow[rd, phantom, "\ulcorner" very near start] & X \arrow[d, "e", two heads] \\
                                                                 & A \arrow[r, "m"', tail]                                          & B                          
\end{tikzcd}
\]
Given a triangle with vertex $RZ$, we get the above solid diagram and so a
unique dashed map.

Now, we show that the induced vertical/cartesian factorization system is
$(\epi, \mono)$ if and only if the factorization system is cartesian. Now, if
$\epi$ is the vertical class, then it satisfies 2-of-3. So, suppose that $\epi$
satisfies 2-of-3, and we will show that a map is in $\epi$ if and only if it is sent to an
isomorphism by $R$. Let $f : X \epi Y$, and consider the following naturality
square for $R$:
\[
\begin{tikzcd}
X \arrow[d, two heads] \arrow[r, two heads] & Y \arrow[d, two heads] \\
RX \arrow[r, tail]                          & RY                    
\end{tikzcd}
\]
By cancellation, $Rf : RX \mono RY$ is also in $\epi$, and is therefore an
isomorphism. Similarly, if $Rf$ is an isomorphism for arbitrary $f : X \to Y$,
then by 2-of-3, $f$ is in $\epi$. 
\end{proof}

\begin{rmk}
  Working in homotopy type theory, Rijke and Cherubini \cite{cherubini2020modal}
  have associated to every modality another modal-equivalence/modal-{\'e}tale factorization system. This
  factorization system is the vertical/cartesian factorization system induced by
  the modal operator $! : \Type \to \Type_!$.
\end{rmk}

\begin{thm}\label{thm:rsofs.to.groth.pseudofunctor}
Sending a right stable orthogonal factorization system with enough $\epi$-injectives to the Grothendieck fibration $R : \Ca \to \Ca_R$ given by
injective replacement constitutes a pseudo-functor $\Xi : \type{RSOFS}_R \to
\type{Groth}_{R}$ from the 2-category of right stable orthogonal factorization
systems with enough injectives and functors preserving both classes to the
2-category of Grothendieck fibrations with right adjoint right inverses and
cartesian functors between them.
\end{thm}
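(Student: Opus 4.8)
The plan is to promote the object-level assignment of Proposition~\ref{prop:cartesian.fact.groth.fib} to a pseudo-functor by specifying its action on $1$- and $2$-cells together with the coherence isomorphisms, and then checking the pseudo-functor axioms. First, on objects: the preceding proposition shows that having enough $\epi$-injectives is equivalent to the inclusion $i_{\Ca} : \Ca_R \hookrightarrow \Ca$ admitting a left adjoint, which is exactly the injective-replacement functor $R_{\Ca} := R$. Since $i_{\Ca}$ is fully faithful, the counit $R_{\Ca} i_{\Ca} \cong \id_{\Ca_R}$ is an isomorphism, so $i_{\Ca}$ is a right adjoint right inverse of $R_{\Ca}$; together with Proposition~\ref{prop:cartesian.fact.groth.fib} this places $R_{\Ca}$ in $\type{Groth}_{R}$. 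I will isolate one fact for repeated use, already extracted in the proof of Proposition~\ref{prop:cartesian.fact.groth.fib}: \emph{$R_{\Ca}$ inverts $\epi$}, i.e. it sends every map of $\epi$ to an isomorphism (indeed $R_{\Ca}f$ lies in $\epi$ by cancellation and lies between injectives by Lemma~\ref{lem:maps.between.injective.right}, hence is an isomorphism).

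Next, on $1$-cells: given a class-preserving functor $F : \Ca \to \Da$, I define its fiber part by $\bar F := R_{\Da} \circ F \circ i_{\Ca} : \Ca_R \to \Da_R$. The square with top $F$, bottom $\bar F$, and verticals $R_{\Ca}, R_{\Da}$ commutes up to a canonical isomorphism $\alpha_F : R_{\Da} F \cong \bar F R_{\Ca}$, obtained by applying $R_{\Da}$ to $F(\eta^{\Ca})$, where $\eta^{\Ca} : \id \Rightarrow i_{\Ca}R_{\Ca}$ is the unit (the injective-replacement map, which lies in $\epi$). The components of $F(\eta^{\Ca})$ lie in $\epi$ because $F$ preserves $\epi$, so $R_{\Da}$ inverts them and $\alpha_F$ is indeed an isomorphism. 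Since $F$ preserves $\mono$ by hypothesis and $\mono$ is the cartesian class, $F$ preserves cartesian maps; thus $(F,\bar F,\alpha_F)$ is a morphism of $\type{Groth}_{R}$.

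On $2$-cells I send a natural transformation $\theta : F \Rightarrow G$ between class-preserving functors to the pair $(\theta,\ R_{\Da}\,\theta\, i_{\Ca})$, whose compatibility with $\alpha_F,\alpha_G$ is immediate from naturality of $\eta^{\Ca}$ and of $R_{\Da}$. The pseudo-functor's comparison data are then forced: the unitor at $\Ca$ is the counit isomorphism $R_{\Ca}i_{\Ca} \cong \id$ (so $\Xi(\id_{\Ca}) = (\id,\ R_{\Ca}i_{\Ca})$ is isomorphic to the identity morphism), and for composable $F : \Ca \to \Da$ and $F' : \Da \to \Ea$ the compositor $\overline{F'F} \cong \bar{F'}\,\bar F$ is $R_{\Ea}F'(\eta^{\Da})$, an isomorphism again because $\eta^{\Da}$ lies in $\epi$, $F'$ preserves $\epi$, and $R_{\Ea}$ inverts $\epi$.

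The main obstacle is verifying the pseudo-functor coherence axioms --- the associativity pentagon for the compositors and the two unit triangles --- rather than constructing the data. Each of these diagrams unwinds, after cancelling the $R$'s and $i$'s, to an instance of the triangle identities for the adjunctions $R_{\Ca}\dashv i_{\Ca}$ (and its analogues) combined with naturality and the invert-$\epi$ property; the conceptual content is that the entire fiber-level structure is transported along the reflections $R$, so coherence is inherited from the coherence of the adjunctions. I expect the bookkeeping of whiskerings to be the most laborious step, but no genuinely new idea beyond the invert-$\epi$ principle should be required.
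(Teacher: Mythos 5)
Your proposal is correct and takes essentially the same route as the paper: the fiber functor is $R_{\Da}Fi_{\Ca}$, the comparison isomorphisms are obtained by applying $R$ to the image of the injective-replacement (unit) maps, which lie in $\epi$ and are therefore inverted by $R$, and coherence is deferred to the uniqueness of injective replacements, i.e.\ the reflection adjunction. Your explicit isolation of the ``$R$ inverts $\epi$'' principle is just a cleaner packaging of what the paper already does inside the proof of Proposition~\ref{prop:cartesian.fact.groth.fib}.
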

\begin{proof}
  
We begin by construction $\Xi$. Given a
cartesian factorization system $(\epi, \mono)$ on a category $\Ca$ enough injectives, we
send it to the injective replacement $R : \Ca \to \Ca_R$, which is a
Grothendieck fibration with a right adjoint right inverse by Proposition
\ref{prop:cartesian.fact.groth.fib}. Given a functor $F : \Ca \to \Da$ which
preserves the left class, take the square
\[
\begin{tikzcd}
\Ca \arrow[d, "R"'] \arrow[r, "F"] & \Da \arrow[d, "R"] \\
\Ca_R \arrow[r, "RF"']             & \Da_R             
\end{tikzcd}
\]
We can see that this square commutes up to isomorphism by considering the
injective replacement $X \epi RX$; its image $FX \epi FRX$ gives rise to an
isomorphism $RFX \xto{\sim} RFRX$. Given a natural transformation $\theta : F
\to G$ between such functors, we can take $R\theta : RF \to RG$ to get an
appropriate 2-cell in $\type{Groth}$.

We now need to show the pseudofunctoriality of this construction. We define
\begin{itemize}
  \item the unitor $u_{\Ca} : \id_{\Ca_{R}} \xto{\sim} R$ given by noting any
      injective replacement $X \epi RX$ is also in $\mono$ and therefor is an isomorphism.
  \item the compositor $f_{F, G} : RGRF \xto{\sim} RGF$ given at $X$ by taking
    the injective replacement $r_{FX} : FX \epi RFX$ and applying $RG$ to it to
    get an isomorphism $RGr_{FX} : RGFX \xto{\sim} RGRFX$, and taking the
    inverse
    $$f_{F,G} := (RGr_{FX})\inv.$$
\end{itemize}
The coherence conditions follow from the uniqueness of injective replacements.
\end{proof}

\begin{thm}\label{thm:cartesian.equivalence}
The pseudo-functors $\Phi$ and $\Xi$ of Proposition
\ref{prop:groth.fib.cartesian.fact} and Theorem
\ref{thm:rsofs.to.groth.pseudofunctor} assemble into an equivalence:
\[
\begin{tikzcd}
\type{Groth}_{R} \arrow[rr, "\Phi", bend left] & \simeq & \type{Cart}_R \arrow[ll, "\Xi", bend left]
\end{tikzcd}
\]
\end{thm}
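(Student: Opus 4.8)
The plan is to realize $\Phi$ and $\Xi$ as mutually inverse biequivalences by producing pseudonatural equivalences $\Phi\Xi \simeq \id_{\type{Cart}_R}$ and $\Xi\Phi \simeq \id_{\type{Groth}_R}$ and then checking the triangle coherences. Nearly all of the object-level content is already isolated in Proposition~\ref{prop:cartesian.fact.groth.fib} and Theorem~\ref{thm:rsofs.to.groth.pseudofunctor}, so the genuine work is the bookkeeping that upgrades these pointwise statements to coherent transformations. As a preliminary I would confirm that both functors restrict to the decorated $2$-categories: $\Xi$ sends a cartesian factorization system to a Grothendieck fibration with a right adjoint right inverse by Proposition~\ref{prop:cartesian.fact.groth.fib}, and $\Phi$ lands in $\type{Cart}_R$ because a right adjoint right inverse $s$ (with $p \dashv s$ and counit an isomorphism) supplies enough $\epi$-injectives, as explained in the $\Xi\Phi$ step below.

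For the composite $\Phi\Xi$, start from a cartesian factorization system $(\epi,\mono)$ on $\Ca$. Applying $\Xi$ produces the injective-replacement fibration $R : \Ca \to \Ca_R$, and the ``only if'' half of Proposition~\ref{prop:cartesian.fact.groth.fib} identifies its vertical/cartesian factorization system with the original $(\epi,\mono)$ on the \emph{same} underlying category $\Ca$. Thus $\Phi\Xi$ is the identity on objects on the nose. On a class-preserving functor $F$ and a natural transformation $\theta$ there is likewise nothing to move: $\Xi$ records $F$ and $\theta$ through the square built on $RF$, and $\Phi$ reads back exactly the class-preserving functor $F$ and the transformation $\theta$. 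The only thing left is to check that the unitors and compositors of $\Phi\Xi$ agree with those of the identity, which follows from the uniqueness of injective replacements used in Theorem~\ref{thm:rsofs.to.groth.pseudofunctor}.

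For the composite $\Xi\Phi$, start from a Grothendieck fibration $p : \Ea \to \Ba$ with right adjoint right inverse $s$. The crux is to identify the $\epi$-injective objects of the vertical/cartesian system with the essential image of $s$. On one hand each $sB$ is injective: an extension problem along a vertical $e$ against $sB$ transports across the adjunction $\Hom_{\Ea}(-,sB)\cong\Hom_{\Ba}(p(-),B)$ to a problem in $\Ba$ in which $p(e)$ is already invertible, so it has a unique solution. On the other hand, for any $X$ the unit $\eta_X : X \to spX$ is vertical (its image under $p$ is invertible by a triangle identity), so it is an $\epi$-injective replacement; and if $X$ is itself injective then $\eta_X$ is also in $\mono$ by Lemma~\ref{lem:maps.between.injective.right}, hence an isomorphism, placing $X$ in the essential image of $s$. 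This simultaneously shows $\Phi(p)$ has enough injectives, identifies $\Ea_R \simeq \Ba$ via $p$ and $s$, and exhibits the injective-replacement functor $R = sp$ as corresponding to $p$ under this equivalence; a cartesian functor $F$ over $G$ is then carried by $\Xi\Phi$ to a fibration map whose base component is $G$ up to the canonical isomorphism. I would package these comparisons into a pseudonatural equivalence $\Xi\Phi \simeq \id_{\type{Groth}_R}$.

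The main obstacle is this last identification together with its coherence: everything hinges on using the right adjoint right inverse to pin down the injective objects, and then on verifying that the resulting equivalences $\Ea_R \simeq \Ba$ are pseudonatural in the fibration and compatible with the unitors and compositors of $\Phi$ and $\Xi$, so that the two pseudonatural equivalences satisfy the triangle identities up to invertible modification. These coherence checks are routine but unavoidable, and in each case reduce to the uniqueness clause in the universal property of cartesian lifts and of injective replacements.
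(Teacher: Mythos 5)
Your proposal follows essentially the same route as the paper: the unit of the equivalence is the identity (via the ``only if'' half of Proposition~\ref{prop:cartesian.fact.groth.fib}), and the counit is built by showing that the right adjoint right inverse supplies enough $\epi$-injectives --- the unit $\eta_X : X \to spX$ is a vertical replacement, $sB$ is injective, and $p : \Ea_R \to \Ba$ is an equivalence with inverse $s$. Your treatment is in fact somewhat more explicit than the paper's (e.g.\ the adjunction-transposition argument for why $sB$ is injective, and the identification of the injectives with the essential image of $s$), but the decomposition and key steps coincide.
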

\begin{proof}
First, we should show that if a Grothendieck fibrations $p : \Ea \to \Ba$ admits
a right adjoint right inverse $r$, then $\Ea$ admits vertical replacements. 

For the unit, we take the identity $\Ca \to \Phi\Xi \Ca$, which preserves both
classes by Proposition \ref{prop:cartesian.fact.groth.fib}. For the counit, we
take the cartesian functor
\[
\begin{tikzcd}
\Ea \arrow[r, equals] \arrow[d, "rp"'] & \Ea \arrow[d, "p"] \\
\Ea_R \arrow[r, "p"']          & \Ba               
\end{tikzcd}
\]
We note that we may take $rp : \Ea \to \Ea_R$ for the injective replacement
where $r : \Ba \to \Ea$ is the right adjoint to $p$
since $rB$ is $\epi$-injective for all $B \in \Ba$. This square commutes up to
natural isomorphism since, by hypothesis, the unit $\eta : X \to rp X$ is
vertical, and therefore $p \eta : p X \xto{\sim} prp X$ gives a witness to
comutativity.
It remains to show that this counit is invertible. We will show that $p : \Ea_R
\to \Ba$ is an equivalence with inverse $r : \Ba \to \Ea_R$. We note that the
unit $\eta : X \to rpX$ is an isomorphism since as a map between
$\epi$-injectives, it is in $\mono$ and it is by hypothesis in $\epi$. The
counit $\epsilon : prX \to X$ is the identity by assumption. 
\end{proof}

\section{The Fiberwise Dual}

As we saw in the last section, there is a close relationship between cartesian
factorization systems and Grothendieck fibrations. In this section, we will give
a construction which corresponds to taking the \emph{fiberwise opposite} of a
indexed category.

\begin{defn}
Let $(\epi, \mono)$ be a cartesian factorization system on a category
$\Ca$. Define the \emph{fiberwise dual} of $\Ca$ to be the category $\Ca^{\vee}$ with the
same objects as $\Ca$, but where a morphism $A \to B$ is a span of the following form:
\[
\begin{tikzcd}
  & \bullet \arrow[ld, two heads] \arrow[rd, tail] &   \\
A &                                                & B
\end{tikzcd}
\]
Composition is given by pullback, as is usual with spans. We also define the
dual classes $\epi^{\vee}$ and $\mono^{\vee}$ to be the spans of the following
forms respectively:
\[
\begin{tikzcd}
        & \bullet \arrow[ld, two heads] \arrow[rd, equals] &         &  &         & \bullet \arrow[ld, equals] \arrow[rd, tail] &         \\
\bullet &                                          & \bullet &  & \bullet &                                     & \bullet
\end{tikzcd}
\]
\end{defn}

Though this is naturally a bicategory, there is at most one morphism between any
two such spans and it must be an isomorphism. Consider a
morphism between these spans as follows:
\[
\begin{tikzcd}
  & \bullet \arrow[ld, two heads] \arrow[rd, tail] \arrow[dd] &   \\
A &                                                           & B \\
  & \bullet \arrow[lu, two heads] \arrow[ru, tail]            &  
\end{tikzcd}
\]
By cancellation for $\mono$, the middle arrow is in $\mono$; by 2-of-3 for $\epi$, it
is also in $\epi$. But it is therefore an isomorphism. It is unique by
the following lemma:
\begin{lem}\label{lem:span.morphism.uniqueness}
Let $(\epi, \mono)$ be a cartesian factorization system. Then given a solid
diagram as below, there may be at most one dashed arrow making the diagram
commute:
\[
\begin{tikzcd}
\bullet \arrow[r]                                              & \bullet                                      \\
\bullet \arrow[u, two heads] \arrow[d, tail] \arrow[r, dashed] & \bullet \arrow[u, two heads] \arrow[d, tail] \\
\bullet \arrow[r]                                              & \bullet                                     
\end{tikzcd}
\]
\end{lem}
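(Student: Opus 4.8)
The plan is to assume that two dashed arrows $h, h' : M_1 \to M_2$ both make the diagram commute and to deduce $h = h'$. Writing $e_1, e_2$ for the two left-class legs (pointing up) and $m_1, m_2$ for the two right-class legs (pointing down), commutativity of the two squares says precisely that $e_2 h = e_2 h'$ (both equal the top composite $t e_1$) and $m_2 h = m_2 h'$ (both equal the bottom composite $b m_1$). So everything reduces to the assertion that a map into the apex $M_2$ of the span $(e_2, m_2)$ is determined by its composites along the two legs. This is the span-theoretic shadow of the uniqueness clause in the universal property of a cartesian arrow, and it is what I will need to extract from orthogonality together with the two cartesian axioms.

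The first step I would carry out is to force the two $\epi$-parts to coincide. Factor $h = \mu\eta$ and $h' = \nu\eta'$ orthogonally, with $\eta,\eta' \in \epi$ and $\mu,\nu \in \mono$. Since the right-class leg $m_2$ composes with the monos $\mu,\nu$ to give monos $m_2\mu, m_2\nu \in \mono$, the displays $m_2 h = (m_2\mu)\eta$ and $m_2 h' = (m_2\nu)\eta'$ are \emph{the} orthogonal factorizations of the single map $m_2 h = m_2 h'$. Uniqueness of factorizations then supplies an isomorphism identifying $\eta$ with $\eta'$, so that after absorbing it I may assume that $h = \mu\eta$ and $h' = \nu\eta$ share a common $\epi$-part $\eta : M_1 \epi P$, with $\mu,\nu : P \mono M_2$ satisfying $m_2\mu = m_2\nu$. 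The leftover $\epi$-condition then reads $e_2\mu\eta = e_2\nu\eta$.

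It remains to prove $\mu = \nu$, and this is the step I expect to be the main obstacle: neither leg is a monomorphism on its own (in a general cartesian factorization system the right class need not consist of monos, nor the left class of epis), so the equality must genuinely combine the $m_2$-condition with the $e_2$-condition rather than following from either. My intended route is to invoke Right Stability to form the pullback of $e_2$ along the common image of $e_2\mu$ and $e_2\nu$ inside $T_2$ --- these two images agree because precomposing with the epi $\eta$ does not alter an image --- so that both $\mu$ and $\nu$ present cones over this cartesian square, and then to identify the comparison map forced by the two cones as an isomorphism in the style of \cref{lem:cartesian.fact.pullback.square}, using cancellation for $\mono$ and Left $2$-of-$3$ for $\epi$. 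The difficulty, and the reason both cartesian axioms are indispensable here, is that this is exactly the abstract form of \emph{uniqueness of cartesian lifts}: the $\mono$-leg is meant to pin down the fibrewise data while the $\epi$-leg, through $2$-of-$3$, pins down the base data, and only the interaction of the two rigidifies $h$ --- so the crux of the write-up will be arranging the pullback and comparison so that this interaction actually closes rather than merely reproducing the original joint-monicity problem one level up.
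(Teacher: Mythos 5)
Your reduction of the lemma to the joint monicity of the pair $(e_2,m_2)$ is exactly right, and your first step is sound: writing $m_2h=(m_2\mu)\eta$ and $m_2h'=(m_2\nu)\eta'$ as two $(\epi,\mono)$-factorizations of the single known map $b m_1$ and absorbing the comparison isomorphism legitimately reduces everything to two maps $\mu,\nu:P\to M_2$ in the right class with $m_2\mu=m_2\nu$ and $e_2\mu\eta=e_2\nu\eta$ for a common $\eta\in\epi$. The gap is precisely the step you flag yourself: the argument for $\mu=\nu$ is never carried out, and the route you sketch cannot close it. If you take the image $l:L\mono T_2$ of $e_2\mu$ (which, as you note, agrees with that of $e_2\nu$) and use \cref{lem:cartesian.fact.pullback.square} to exhibit $(P,\mu,a)$ and $(P,\nu,a')$ as two pullback cones over the cospan formed by $l$ and $e_2$, the comparison you obtain is an automorphism $w$ of $P$ satisfying $\mu w=\nu$, $(m_2\mu)w=m_2\mu$, and $aw=a'$ with $a,a'\in\epi$ agreeing after precomposition with $\eta$. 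Ruling out $w\neq\id$ is an instance of the very joint-monicity statement you set out to prove, so no progress has been made; your own closing sentence concedes this, which means the proposal is a plan with its hardest step unexecuted rather than a proof.

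For what it is worth, the step you identified as the crux is also the step the printed proof passes over: it determines the $\mono$-part of the factorization of the dashed arrow as ``the'' pullback of the image of the top arrow along $e_2$, but a pullback determines its projections only up to automorphism of the apex, and the induced map from $M_1$ into that pullback has the dashed arrow itself as one of its two components, so the universal property does not pin it down from the solid data. I do not see how to close this from the axioms of \cref{defn:cartesian.factorization.system} alone. Indeed, consider the category with three objects $X,Y,W$, with $\Aut(X)=\{\id_X,w\}\cong\Zb/2$, $\Hom(X,Y)=\{e\}$ and $\Hom(X,W)=\{m\}$ with $ew=e$ and $mw=m$, and all other nonidentity hom-sets empty; the classes $\epi=\{\text{isos}\}\cup\{e\}$ and $\mono=\{\text{isos}\}\cup\{m\}$ form a cartesian factorization system (every lifting problem pitting $e$ against $m$, and every cospan needed for Right Stability, is ruled out by $\Hom(Y,W)=\Hom(W,Y)=\emptyset$, and Left $2$-of-$3$ holds because the only maps into $X$ are its automorphisms), yet $\id_X$ and $w$ are two distinct fillers of the solid diagram built from $e$, $m$, $\id_Y$ and $\id_W$. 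So the obstruction you ran into is genuine: completing the proof requires input beyond the bare axioms --- for example the enough-injectives or fibration hypotheses used elsewhere in the paper, under which $e_2$ becomes invertible after applying $R$ or $p$ and the needed cancellation is available.
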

\begin{proof}
We will show that any two $(\epi, \mono)$-factorizations of such a dashed arrow
will agree. Consider a factorization of the top square and note that by unique
functoriality, there is a unique dotted arrow filling the diagram
\[
\begin{tikzcd}
\bullet \arrow[r, two heads]                              & \bullet \arrow[r, tail]                                                  & \bullet                      \\
\bullet \arrow[u, two heads] \arrow[r, two heads, dashed] & \bullet \arrow[r,
dashed, tail] \arrow[u, "\exists!", two heads, dotted] \arrow[ur, phantom,
"\llcorner" very near start]& \bullet \arrow[u, two heads]
\end{tikzcd}
\]
By Lemma \ref{lem:cartesian.fact.pullback.square}, the right square is a
pullback. Therefore, the $(\epi, \mono)$-factorization of any dashed arrow in
the original diagram is given by the universal property of this pullback, and is
therefore uniquely determined by the data of the solid diagram.
\end{proof}

Therefore, the natural map from the bicategory of such spans to the locally
discrete bicategory given by taking the category of isomorphism classes of spans
and reincluding as a bicategory is an equivalence. We are therefore justified in
thinking of $\Ca^{\vee}$ as a category.
\begin{rmk}
In univalent foundations, $\Ca^{\vee}$ would be \emph{proven} to be a category,
so long as $\Ca$ is a (univalent) category. That is, showing that there is at
most a single isomorphism between any two such spans shows that the type of such
spans is in fact a set.
\end{rmk}

\begin{thm}
For $(\epi, \mono)$ a cartesian factorization system on a category $\Ca$, the
dual classes $(\epi^{\vee}, \mono^{\vee})$ form a cartesian factorization
system on $\Ca^{\vee}$.
\end{thm}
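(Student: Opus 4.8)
The plan is to verify the four orthogonal-factorization-system axioms for $(\epi^{\vee}, \mono^{\vee})$ and then the two cartesian clauses, throughout translating statements about spans in $\Ca^{\vee}$ back into $\Ca$. The bookkeeping to keep in mind is the dictionary: a morphism in $\epi^{\vee}$ is a span whose right ($\mono$) leg is an isomorphism (essentially a ``reversed'' $\epi$-map), while a morphism in $\mono^{\vee}$ is a span whose left ($\epi$) leg is an isomorphism (an honest $\mono$-map). With this, closure under composition and containment of isomorphisms are immediate for both classes: when composing two $\epi^{\vee}$-spans (resp.\ two $\mono^{\vee}$-spans) the pullback defining span composition is taken along an identity leg and so is trivial, and the surviving legs compose within $\epi$ (resp.\ $\mono$). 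Factorization is equally direct: any span $A \overset{e}{\twoheadleftarrow} M \overset{m}{\rightarrowtail} B$ splits at its apex as $(A \overset{e}{\twoheadleftarrow} M = M)$ followed by $(M = M \overset{m}{\rightarrowtail} B)$, a member of $\epi^{\vee}$ followed by a member of $\mono^{\vee}$.

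Next I would establish orthogonality. Given a commuting square in $\Ca^{\vee}$ with left edge $p \in \epi^{\vee}$ and right edge $m \in \mono^{\vee}$, I normalize $p$ to a span $(A \overset{\bar p}{\twoheadleftarrow} B = B)$ and $m$ to $(C = C \overset{\bar m}{\rightarrowtail} D)$, writing the top and bottom edges as general spans with apexes $U$ and $V$. Composing the two ways around the square (again each composite involves a pullback along an identity, hence is computed without effort) presents the commutativity as an equality of two spans $A \to D$; by Lemma \ref{lem:span.morphism.uniqueness} this is witnessed by a unique isomorphism $\phi$ between $U$ and $V$ compatible with all legs. I then define the lift $B \to C$ to be the span with apex $U$, with $\epi$-leg obtained by composing $\phi$ with the $\epi$-leg of the bottom map, and with $\mono$-leg equal to the $\mono$-leg of the top map; the two required triangles reduce to the defining relations of $\phi$. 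Uniqueness follows because the $\mono$-leg of the bottom span is monic and hence left-cancellable, which pins down the comparison isomorphism (alternatively, directly from Lemma \ref{lem:span.morphism.uniqueness}).

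It then remains to check the two cartesian clauses. For Left 2-of-3 of $\epi^{\vee}$, suppose $g$ and $gf$ lie in $\epi^{\vee}$; computing $gf$ via Right Stability exhibits its $\mono$-leg as the pullback $m'$ of the $\mono$-leg $m$ of $f$ along the $\epi$-leg $c$ of $g$, and $gf \in \epi^{\vee}$ forces $m'$ to be invertible. From the pullback square $m c' = c m'$ one reads off, via the Cancellation lemma for $\epi$ together with the identity ``iso $= \epi \cap \mono$'', that $m$ is itself an isomorphism, i.e.\ $f \in \epi^{\vee}$. For Right Stability, given $e \in \epi^{\vee}$ (a reversed $\epi$-map $\bar e$) pulled back along $m \in \mono^{\vee}$ (a $\mono$-map $\bar m$), I form the $(\epi,\mono)$-factorization $Y \overset{q}{\twoheadrightarrow} W \overset{n}{\rightarrowtail} X$ of the composite $\bar e \bar m$ in $\Ca$ and propose $W$, with legs built from $q$ and $n$, as the pullback; the leg parallel to $e$ is then $(W \overset{q}{\twoheadleftarrow} Y = Y) \in \epi^{\vee}$, as required. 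That the square commutes comes down to showing the comparison map $\iota$ from $Y$ to the $\Ca$-pullback $W \times_X Z$ is an isomorphism, which follows from cancellation for $\mono$ and --- crucially --- Left 2-of-3 for $\epi$, so the two cartesian clauses genuinely feed into one another.

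The main obstacle I anticipate is the universal property of this last pullback inside the span category $\Ca^{\vee}$: given a competing cone $(T, f, g)$ I must produce a unique mediating morphism $T \to W$, and since the morphisms of $\Ca^{\vee}$ are themselves spans this is a multi-stage diagram chase that repeatedly invokes Right Stability (to form the composites defining the cone condition), the ordinary pullback universal property in $\Ca$, and the uniqueness of span morphisms from Lemma \ref{lem:span.morphism.uniqueness} to rigidify the mediating map. Every other step collapses to a short computation precisely because span composition against an identity leg is trivial; it is only here that the full force of the cartesian hypotheses and the set-likeness of $\Ca^{\vee}$ must be combined. As a sanity check, when $(\epi,\mono)$ has enough injectives the statement is subsumed by Proposition \ref{prop:groth.fib.cartesian.fact}, since $(\epi^{\vee},\mono^{\vee})$ is then the vertical/cartesian system of the fiberwise-opposite fibration; the content here is that it holds with no such hypothesis.
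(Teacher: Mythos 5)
Your plan follows the paper's proof in all of its essential constructions: factorization by splitting a span at its apex, the identification of $\epi^{\vee}$-composites with reversed $\epi$-composites, and --- for Right Stability --- taking the cospan $A \mono C \epi B$ in $\Ca$, factoring the composite as $A \epi D \mono B$, and using Lemma \ref{lem:cartesian.fact.pullback.square} to recognize the resulting square. The only structural difference is that you verify orthogonality of $\epi^{\vee}$ against $\mono^{\vee}$ where the paper verifies unique functoriality of the factorization directly (these are equivalent given the other axioms), and your Left 2-of-3 argument is actually more explicit than the paper's one-line appeal to $\epi^{\vee}=\epi\op$, since it correctly handles the case where $f$ is a general span.

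Two points need attention. First, in the uniqueness of the diagonal filler you assert that the $\mono$-leg of the bottom span is ``monic and hence left-cancellable''; the right class of an orthogonal factorization system need not consist of monomorphisms, so this justification is false. Your parenthetical fallback to Lemma \ref{lem:span.morphism.uniqueness} is the correct route: any two fillers are forced, via the comparison isomorphisms witnessing the two triangle identities, to be isomorphic as spans to the one you constructed, and the lemma pins down those comparison isomorphisms. Second, the universal property of the pullback in $\Ca^{\vee}$, which you defer as the ``main obstacle,'' is precisely where the paper spends most of its effort; the ingredient your sketch does not identify is that after all the forced choices are made, the entire mediating datum reduces to a \emph{single} lifting problem of an $\epi$ against a $\mono$ (the square $Z \mono A \epi D$ over $Y \mono B$ in the paper's notation), whose unique solution is supplied by orthogonality in $\Ca$ and lands in $\mono$ by cancellation. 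Without naming that reduction, the ``multi-stage diagram chase'' remains a genuine gap rather than a routine verification, though everything else in your proposal is sound.
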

\begin{proof}
  It is evident that every morphism in $\Ca^{\vee}$ factors as a morphism in
  $\epi^{\vee}$ followed by a morphism in $\mono^{\vee}$. We will now show the
  unique functoriality condition. Given a commuting square in $\Ca^{\vee}$ like
  so:
  \[
\begin{tikzcd}
\bullet                                      & \bullet \arrow[l, two heads] \arrow[r, tail]                          & \bullet                                      \\
\bullet \arrow[d, tail] \arrow[u, two heads] & \square \arrow[l, two heads]
\arrow[u, two heads] \arrow[r, tail] \arrow[d, tail] \arrow[ld, phantom, "\urcorner" very near start] \arrow[ru, phantom, "\llcorner" very near start] & \bullet \arrow[u, two heads] \arrow[d, tail] \\
\bullet                                      & \bullet \arrow[r, tail] \arrow[l, two heads]                          & \bullet                                     
\end{tikzcd}
  \]
  We note that the middle arrows are in the classes described above by pullback
  preservation. We may therefore expand this diagram to show a functorial
  factorization. 
  \[
\begin{tikzcd}
\bullet                                & \bullet \arrow[l, two heads] \arrow[r, tail]                          & \bullet                                \\
\bullet \arrow[u, two heads] \arrow[d, equals] & \square \arrow[d, equals]
\arrow[u] \arrow[l] \arrow[r] \arrow[ld, phantom, "\urcorner" very near start]
\arrow[ru, phantom, "\llcorner" very near start] & \bullet \arrow[u, two heads]
\arrow[d, equals] \\
\bullet                                & \square \arrow[l, two heads] \arrow[r, tail]                          & \bullet                                \\
\bullet \arrow[u, equals] \arrow[d, tail]      & \square \arrow[l] \arrow[d]
\arrow[r] \arrow[u, equals] \arrow[ru, phantom, "\llcorner" very near start]
\arrow[ld, phantom, "\urcorner" very near start] & \bullet \arrow[u, equals] \arrow[d, tail]      \\
\bullet                                & \bullet \arrow[r, tail] \arrow[l, two heads]                          & \bullet                               
\end{tikzcd}
  \] 
  Since these diagrams are merely re-arrangements of each other, we find that
  this functoriality is unique.

  Now, $\epi^{\vee}$ satisfies 2-of-3 since $\epi$ does and $\epi^{\vee} =
  \epi\op$ as categories. It remains then to show that pullbacks of
  $\epi^{\vee}$s along $\mono^{\vee}$s exist and are in $\epi^{\vee}$. Suppose
  we have such a cospan:
  \[
\begin{tikzcd}
  &                                   & B                                      \\
  &                                   & C \arrow[d, equals] \arrow[u, two heads] \\
A & A \arrow[l, equals] \arrow[r, tail] & C                                     
\end{tikzcd}
  \]
  We see that this data consists of morphisms $A \mono C \epi B$. Let $A \epi D
  \mono B$ be the factorization of the composite of these morphisms. Then we may
  form the following square:
  \[
\begin{tikzcd}
D                                & D \arrow[l, equals] \arrow[r, tail]                                                      & B                                \\
A \arrow[d, equals] \arrow[u, two heads] & A \arrow[u, two heads] \arrow[r, tail] \arrow[d, equals] \arrow[l, equals] \arrow[ld, phantom, "\urcorner" very near start] \arrow[ru, phantom, "\llcorner" very near start] & C \arrow[d, equals] \arrow[u, two heads] \\
A                                & A \arrow[l, equals] \arrow[r, tail]                                                      & C                               
\end{tikzcd}
  \]
  The top left square is a pullback by Lemma
  \ref{lem:cartesian.fact.pullback.square}, so this is a commuting square in
  $\Ca^{\vee}$ . Now, suppose we have a square as follows:
  \[
\begin{tikzcd}
X                                                 & Y \arrow[l, two heads] \arrow[r, tail]                                                                    & B                                \\
Z \arrow[d, "\alpha"', tail] \arrow[u, two heads] & Z \arrow[u, "\beta", two
heads] \arrow[r, tail] \arrow[d, "\alpha", tail] \arrow[l, equals] \arrow[ld, phantom,
"\urcorner" very near start] \arrow[ru,
phantom, "\llcorner" very near start] & C \arrow[d, equals] \arrow[u, two heads] \\
A                                                 & A \arrow[l, equals] \arrow[r, tail]                                                                               & C                               
\end{tikzcd}
  \]
  The data of the terminal map into the limit cone consists of a
  diagram:
  \[
\begin{tikzcd}
  &                                                                                                              & X                                              &                                                                                       &   \\
  & Z \arrow[ld, tail] \arrow[ru, two heads] \arrow[d, dashed]
  \arrow[r, dashed] \arrow[rd, phantom, "\ulcorner" very near start]
  & ? \arrow[d, dashed, tail] \arrow[u, dashed, two heads] & Y \arrow[rd, tail]
  \arrow[lu, two heads] \arrow[l, dashed] \arrow[d, dashed] \arrow[ld,
  phantom, "\urcorner" very near start] &   \\
A & A \arrow[r, two heads] \arrow[l, equals]
& D                                              & D \arrow[r, tail] \arrow[l,
equals]                                                           & B
\end{tikzcd}
\]

Many of our choices are fixed by commutativity and the pullback conditions. We
end up with a diagram like this:
  \[
\begin{tikzcd}
  &                                                                                                              & X                                              &                                                                                       &   \\
  & Z \arrow[ld, tail] \arrow[ru, two heads] \arrow[d, "\alpha"', tail]
  \arrow[r, "\beta", two heads] \arrow[rd, phantom, "\ulcorner" very near start]
  & Y \arrow[d, dashed, tail] \arrow[u, two heads] & Y \arrow[rd, tail]
  \arrow[lu, two heads] \arrow[l, equals] \arrow[d, dashed, tail] \arrow[ld,
  phantom, "\urcorner" very near start] &   \\
A & A \arrow[r, two heads] \arrow[l, equals]
& D                                              & D \arrow[r, tail] \arrow[l,
equals]                                                           & B
\end{tikzcd}
  \]
  which ultimately depends only on the data of the dashed map
  $\begin{tikzcd} Y \arrow[r, tail, dashed] & D\end{tikzcd}$ making
  the square on the left and the triangle on the right commute. There is a small point to focus on here: the
  choice of $\beta : Z \epi Y$. Since the square will be a pullback, we need a
  map $Z \epi Y$ such that the composite $Z \epi Y \mono D$ is equal to $Z \mono
  A \epi D$; but such factorizations are unique, so if we can provide such a
  composite with left component $\beta : Z \epi Y$, then every choice will be
  isomorphic to this choice.

  Luckily, asking that the square on the left and the triangle on the right
  commute can be arranged into the single square:
  \[
\begin{tikzcd}
Z \arrow[r, tail] \arrow[d, two heads]             & A \arrow[r, two heads] & D \arrow[d, tail] \\
Y \arrow[rr, tail] \arrow[rru, "\exists!", dashed] &                        & B                
\end{tikzcd}
  \]
  This square has a unique filler by orthogonality, which is in the right class
  by cancellation.
  
\end{proof}

\begin{prop}
For any cartesian factorization system $(\epi, \mono)$ on a category $\Ca$, the
functor $\Ca \to \Ca^{\vee\vee}$ sending a map $f : X \to Y$ which factors as $X
\epi M \mono Y$ to the span-of-spans
\[
\begin{tikzcd}
  & X \arrow[ld, equals] \arrow[rd, two heads] &   & M \arrow[ld, equals] \arrow[rd, tail] &   \\
X &                                    & M &                               & Y
\end{tikzcd}
\]
is an equivalence in $\type{Cart}$.
\end{prop}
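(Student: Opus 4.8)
The plan is to exhibit the stated functor, which I will call $\eta : \Ca \to \Ca^{\vee\vee}$, as an isomorphism of categories carrying $\epi$ onto $\epi^{\vee\vee}$ and $\mono$ onto $\mono^{\vee\vee}$; since such an isomorphism is in particular a class-preserving functor whose inverse is again class-preserving, this realizes $\eta$ as an equivalence in $\type{Cart}$. The first step is to unwind $\Ca^{\vee\vee}$. A morphism $A \to B$ there is a span in $\Ca^{\vee}$ whose left leg lies in $\epi^{\vee}$ and whose right leg lies in $\mono^{\vee}$. Since an $\epi^{\vee}$-morphism is a span in $\Ca$ with identity right leg and a $\mono^{\vee}$-morphism is a span in $\Ca$ with identity left leg, such a span-of-spans amounts to an apex $M$ together with a map $A \epi M$ (recorded by the left leg) and a map $M \mono B$ (recorded by the right leg). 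By the discussion following Lemma \ref{lem:span.morphism.uniqueness}, two such data are identified in $\Ca^{\vee\vee}$ precisely by a (necessarily unique) isomorphism of apexes compatible with the legs. Thus a morphism $A \to B$ in $\Ca^{\vee\vee}$ is exactly an $(\epi,\mono)$-factorization of a map $A \to B$, taken up to the unique isomorphism of factorizations.

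Next I would check that $\eta$ is a bijection on objects and hom-sets. On objects it is the identity, as $\ob\Ca^{\vee\vee} = \ob\Ca^{\vee} = \ob\Ca$. On hom-sets, uniqueness of $(\epi,\mono)$-factorizations supplies mutually inverse bijections: $\eta$ sends $f$ to its factorization, while the inverse sends a triple $(M, e, m)$, where $e : A \to M$ lies in $\epi$ and $m : M \to B$ lies in $\mono$, to the composite $me : A \to B$. These are inverse because the factorization of any such composite $me$ is $(M,e,m)$ up to the unique iso of factorizations.

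I then verify functoriality, where the real work lies. Identities are immediate: the factorization of $\id_A$ has identity legs, giving the identity span-of-spans. For composition, take $f$ with factorization $A \xrightarrow{e_f} M_f \xrightarrow{m_f} B$ and $g$ with factorization $B \xrightarrow{e_g} M_g \xrightarrow{m_g} C$. The composite $\eta(g)\circ\eta(f)$ is formed by pulling back the right leg of $\eta(f)$ along the left leg of $\eta(g)$ in $\Ca^{\vee}$, i.e.\ the cospan $M_f \xrightarrow{\mono^{\vee}} B \xleftarrow{\epi^{\vee}} M_g$ given in $\Ca$ by $m_f$ and $e_g$. By the construction of such pullbacks in the proof that $(\epi^{\vee},\mono^{\vee})$ is cartesian, its apex is the middle object $P$ of the factorization $M_f \xrightarrow{e_P} P \xrightarrow{m_P} M_g$ of the composite $e_g m_f$. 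Tracing the two legs through the contravariance of $\epi^{\vee} = \epi\op$ and the covariance of $\mono^{\vee}$, the composite span-of-spans has left leg recording $e_P e_f \in \epi$ and right leg recording $m_g m_P \in \mono$. Since $(m_g m_P)(e_P e_f) = m_g(e_g m_f)e_f = gf$, this is an $(\epi,\mono)$-factorization of $gf$, hence equals $\eta(gf)$ by uniqueness. I expect this step to be the main obstacle, as it requires both the explicit description of pullbacks in $\Ca^{\vee}$ and careful bookkeeping of the opposite variance of the left legs.

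Finally I would match the factorization systems. Unwinding as above, a morphism of $\Ca^{\vee\vee}$ lies in $\epi^{\vee\vee}$ iff its right leg in $\Ca^{\vee}$ is an identity, i.e.\ iff the map $M \mono B$ is an identity, i.e.\ iff the underlying map $A \to B$ lies in $\epi$; dually it lies in $\mono^{\vee\vee}$ iff $A \epi M$ is an identity, i.e.\ iff the underlying map lies in $\mono$. Hence $\eta$ carries $\epi$ bijectively onto $\epi^{\vee\vee}$ and $\mono$ bijectively onto $\mono^{\vee\vee}$, so both $\eta$ and its inverse preserve the two classes. Combined with the previous steps, $\eta$ is an isomorphism in $\type{Cart}$, and in particular the asserted equivalence $\Ca^{\vee\vee} \simeq \Ca$.
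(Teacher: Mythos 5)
Your proposal is correct and follows essentially the same route as the paper: identify a morphism of $\Ca^{\vee\vee}$ with an $(\epi,\mono)$-factorization, use the fact that composition in $\Ca^{\vee\vee}$ is computed by the pullback in $\Ca^{\vee}$, which is itself computed by factoring $e_g m_f$, and then invoke uniqueness of factorizations. You are in fact more explicit than the paper about full faithfulness and the class-matching (modulo the minor point that membership in $\epi^{\vee\vee}$ means the right leg is an \emph{isomorphism}, not literally an identity), but the substance is identical.
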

\begin{proof}
We will show that this is a functor, since it is immediately seen to be fully
faithful and essentially surjective. It quite clearly preserves both classes.

Identities are sent to identities since $X \xto{\id} X \xto{\id} X$ is a $(\epi,
\mono)$-factorization of $\id$. As for preserving compostion, we note that
composition in $\Ca^{\vee\vee}$ is given by taking the pullback in $\Ca^{\vee}$
which is given by factoring in $\Ca$. For any composite $X \xto{f} Y \xto{g} Z$, its
factorization is given by the factorization of $M \mono Y \epi N$ where $X \epi
M \mono Y$ and $Y \epi N \mono Z$ are the factorizations of $f$ and $g$
respectively in the following way:
\[
\begin{tikzcd}
X \arrow[rr, "f"] \arrow[rd, two heads] \arrow[rrdd, two heads, bend right] &                                          & Y \arrow[rr, "g"] \arrow[rd, two heads]           &                    & Z \\
                                                                            & M \arrow[ru, tail] \arrow[rd, two heads] &                                                   & N \arrow[ru, tail] &   \\
                                                                            &
                                                                            & K
                                                                            \arrow[ru,
                                                                            tail]
                                                                            \arrow[rruu,
                                                                            tail,
                                                                            bend
                                                                            right] &                    &  
\end{tikzcd}
\]

\end{proof}

\begin{prop}\label{prop:fiberwise.dual.functor}
The fiberwise dual construction $\Ca \mapsto \Ca^{\vee}$ gives a functor
$(-)^{\vee} : \type{Cart} \to \type{Cart}$.
\end{prop}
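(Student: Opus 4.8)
The plan is to specify $(-)^{\vee}$ on objects and on morphisms and then check functoriality, the preservation of composition being the only nontrivial point. On objects nothing remains to be done: by the preceding theorem the dual classes $(\epi^{\vee}, \mono^{\vee})$ form a cartesian factorization system on $\Ca^{\vee}$, so $\Ca^{\vee}$ is again an object of $\type{Cart}$. The content of the proposition is therefore the action on morphisms. Given a morphism $F : \Ca \to \Da$ of $\type{Cart}$ --- a functor preserving both $\epi$ and $\mono$ --- I would define $F^{\vee} : \Ca^{\vee} \to \Da^{\vee}$ to agree with $F$ on objects and to send a span $A \twoheadleftarrow P \rightarrowtail B$ to $FA \twoheadleftarrow FP \rightarrowtail FB$. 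This is a legitimate morphism of $\Da^{\vee}$ exactly because $F$ preserves both classes, so the left leg stays in $\epi$ and the right leg stays in $\mono$; for the same reason $F^{\vee}$ preserves the dual classes, since a span in $\epi^{\vee}$ (resp. $\mono^{\vee}$) has its right (resp. left) leg an identity, which $F$ carries to an identity.

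The substantive point --- and the main obstacle --- is that $F^{\vee}$ preserves composition, since composition of spans is computed by a pullback while $F$ is only assumed to preserve the two classes, not arbitrary pullbacks. The composite of $A \twoheadleftarrow P \rightarrowtail B$ with $B \twoheadleftarrow Q \rightarrowtail C$ is formed from the pullback of the cospan $P \rightarrowtail B \twoheadleftarrow Q$, which exists by Right Stability; the resulting square
\[
\begin{tikzcd}
P \times_B Q \arrow[r, tail] \arrow[d, two heads] & Q \arrow[d, two heads] \\
P \arrow[r, tail] & B
\end{tikzcd}
\]
has horizontal legs in $\mono$ (pullback-stability of the right class) and vertical legs in $\epi$ (Right Stability). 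The key is that Lemma~\ref{lem:cartesian.fact.pullback.square} recognizes \emph{any} such square as a pullback purely from the class-membership of its four sides. Applying $F$, which preserves both classes, yields a commuting square in $\Da$ of exactly the same shape, so Lemma~\ref{lem:cartesian.fact.pullback.square} (now in $\Da$) again certifies it as a pullback. Hence $F(P \times_B Q)$ with its projections is a pullback of $FP \rightarrowtail FB \twoheadleftarrow FQ$, which is precisely the object used to compose $F^{\vee}$ of the two spans. Therefore $F^{\vee}$ carries each composition-defining pullback to one, and $F^{\vee}(g \circ f) = F^{\vee}(g) \circ F^{\vee}(f)$ as morphisms of $\Da^{\vee}$; the comparison witnessing this is unique by Lemma~\ref{lem:span.morphism.uniqueness}, so the identification is coherent on the category $\Ca^{\vee}$ of isomorphism classes of spans.

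Finally, identity spans $A \xleftarrow{\id} A \xrightarrow{\id} A$ are sent to identity spans, so each $F^{\vee}$ is a functor of cartesian factorization systems. Because $F^{\vee}$ is defined by applying $F$ to the apices and legs of spans, the assignments satisfy $(\id_{\Ca})^{\vee} = \id_{\Ca^{\vee}}$ and $(GF)^{\vee} = G^{\vee} F^{\vee}$ strictly, giving the desired functor $(-)^{\vee} : \type{Cart} \to \type{Cart}$. As flagged in the introduction, I would treat $(-)^{\vee}$ only on objects and morphisms here, leaving the (badly behaved) action on $2$-cells to the later double-categorical treatment.
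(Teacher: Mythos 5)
Your proposal is correct and follows the same route as the paper: the whole argument rests on Lemma~\ref{lem:cartesian.fact.pullback.square}, which certifies the composition-defining pullbacks purely from the class-membership of their sides, so that any class-preserving functor automatically preserves them. You simply spell out in more detail what the paper compresses into ``this is evidently functorial.''
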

\begin{proof}
Given any functor $F : \Ca \to \Da$ which preserves both classes of the
cartesian factorization system on $\Ca$, we note that $F$ preserves any pullback
square of the form:
\[
\begin{tikzcd}
\bullet \arrow[r, tail] \arrow[d, two heads] & \bullet \arrow[d, two heads] \\
\bullet \arrow[r, tail]                      & \bullet                     
\end{tikzcd}
\]
by Lemma \ref{lem:cartesian.fact.pullback.square}. Therefore, we can define
$F^{\vee} : \Ca^{\vee} \to \Da^{\vee}$ by 
\[
\begin{tikzcd}
  & B \arrow[ld, "f"', two heads] \arrow[rd, "g", tail] &   &         &    & FB \arrow[ld, "Ff"', two heads] \arrow[rd, "Fg", tail] &    \\
A &                                                     & C & \mapsto & FA &                                                        & FC
\end{tikzcd}
\]
This is evidently functorial.
\end{proof}

We now show that the fiberwise dual $\Ca^{\vee}$ does in fact represent the
fiberwise dual of a Grothendieck fibration.
\begin{thm}\label{thm:fiberwise.op.square}
  The following square of functors commutes up to natural equivalence:
  \[
\begin{tikzcd}
\type{Groth} \arrow[d, "(-)^{\text{fibop}}"'] \arrow[r, "\Phi"] & \type{Cart} \arrow[d, "(-)^{\vee}"] \\
\type{Groth} \arrow[r, "\Phi"']         & \type{Cart}            
\end{tikzcd}
  \]
\end{thm}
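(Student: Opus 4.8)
The plan is to exhibit, for each Grothendieck fibration $p : \Ea \to \Ba$, a class-preserving equivalence $\Theta_p : (\Phi p)^{\vee} \xrightarrow{\sim} \Phi(p^{\text{fibop}})$, and then to check that the family $\Theta_p$ is natural in $p$. I would work entirely through the indexed-category description of both sides. Recall that $p$ corresponds to the indexed category $\Ea_{(-)} : \Ba\op \to \Cat$ sending $b$ to the fiber $\Ea_b$ and a map $f : a \to b$ to the reindexing functor $f^{\ast} : \Ea_b \to \Ea_a$ (defined by cartesian lifts), and that $\Ea^{\text{fibop}} := \int (\Ea_{(-)})\op$ has the same objects as $\Ea$ but a morphism $(a,A) \to (b,B)$ is a pair consisting of a map $f : a \to b$ in $\Ba$ together with a fiber map $\psi : f^{\ast} B \to A$ in $\Ea_a$.

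First I would unwind a morphism $A \to B$ in $(\Phi p)^{\vee}$, that is, a span $A \stackrel{e}{\twoheadleftarrow} S \stackrel{m}{\rightarrowtail} B$ whose left leg $e$ is vertical and whose right leg $m$ is cartesian. Since $e$ is vertical, $pe$ is invertible, so up to the canonical isomorphism of spans established just before Lemma~\ref{lem:span.morphism.uniqueness} I may take $pS = pA =: a$ with $pe = \id_a$; since $m$ is cartesian over $f := pm : a \to pB$, the object $S$ is canonically the reindexing $f^{\ast} B$ and the leg $e$ becomes exactly a fiber map $f^{\ast} B \to A$. This reproduces the data $(f, \psi)$ of a morphism of $\Ea^{\text{fibop}}$ on the nose, so I set $\Theta_p$ to be the identity on objects and this assignment on morphisms; fullness, faithfulness, and essential surjectivity are then immediate. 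Class preservation is a direct check from the definitions: a span with identity left leg (the class $\mono^{\vee}$) has $S = A$ and is sent to a morphism $(f, \id)$ with invertible fiber component, i.e.\ a cartesian map of $p^{\text{fibop}}$, while a span with identity right leg (the class $\epi^{\vee}$) forces the induced base map to be invertible and so lands among the vertical maps.

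The step I expect to be the main obstacle is functoriality, specifically that span composition by pullback corresponds to the composition law of $\int (\Ea_{(-)})\op$. Given composable spans presenting $(f, \psi)$ and $(g, \chi)$, their composite is formed by pulling the cartesian leg back along the vertical leg; here I would use that pullbacks of vertical maps along cartesian maps are vertical (Right Stability) and that the right class is stable under pullback, so the composite span again has vertical left leg and cartesian right leg with no refactoring needed. The real content is then to identify the pulled-back cartesian leg with a cartesian lift realizing the comparison $f^{\ast} g^{\ast} C \cong (gf)^{\ast} C$, and to read off that the induced left leg is $\psi \circ f^{\ast}(\chi)$, which is precisely composition in the pointwise opposite. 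This is where the pseudofunctoriality of reindexing and the coherence isomorphisms $(gf)^{\ast} \cong f^{\ast} g^{\ast}$ enter, and every comparison must be tracked up to the canonical isomorphisms of cartesian lifts; exercising care here is what makes the argument go through. Preservation of identities is routine, since $X \xto{\id} X \xto{\id} X$ is a factorization of the identity.

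Finally I would verify naturality of $\Theta$ in $p$. As the paper notes, we only track the action on $1$-cells, so it suffices to show that for each cartesian functor $(F,G) : p \to q$ the square relating $\Theta_p$ and $\Theta_q$ commutes up to a canonical isomorphism. This reduces to Proposition~\ref{prop:groth.fib.cartesian.fact}, which guarantees that $F$ preserves vertical and cartesian maps and hence the spans defining $(\Phi p)^{\vee}$, together with the observation that the reindexing comparison $F(f^{\ast} B) \cong (Gf)^{\ast}(FB)$ supplied by the cartesianness of $F$ is exactly the isomorphism witnessing that $F^{\vee}$ agrees with the Grothendieck construction of the pointwise opposite of the induced morphism of indexed categories. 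Assembling these canonical isomorphisms, coherently in the sense already forced by Lemma~\ref{lem:span.morphism.uniqueness}, yields the asserted commutativity up to natural equivalence.
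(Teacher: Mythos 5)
Your proposal is correct and follows essentially the same route as the paper: both arguments pass through the indexed-category description and identify spans with vertical left leg and cartesian right leg with lenses $\lens{f^{\sharp}}{f}$, handling composition via the pullback of the cartesian leg along the vertical leg (Lemma~\ref{lem:cartesian.fact.pullback.square} / Right Stability). The only differences are cosmetic --- you build the comparison functor in the direction $(\Phi p)^{\vee} \to \Phi(p^{\text{fibop}})$ rather than the reverse, and you additionally sketch the naturality in $p$, which the paper's proof leaves implicit.
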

\begin{proof}
We will show that if $p : \Ea \to \Ba$ is the Grothendieck construction of an
indexed category $\Ea_{(-)} : \Ba\op \to \Cat$, then $\Ea^{\vee} \simeq \int_{B
  : \Ba} \Ea_B\op$ is the Grothendieck construction of the fiberwise opposite.
First, let's set some notation:
\begin{itemize}
  \item We will denote objects of the Grothendieck constructions $\int_{B : \Ba}
    \Ea^{(\text{op})}$ by $\lens{E}{B}$ where $B \in \Ba$ and $E \in \Ea_B$.
  \item A morphism in the Grothendieck construction $\Ea = \int_{B : \Ba} \Ea_B$ will
    be denoted by
    $$\lens{f_{\sharp}}{f} : \lens{E}{B} \rightrightarrows \lens{E'}{B'}$$
    where $f : B \to B'$ and $f_{\sharp} : E \to f^{\ast}E'$.
  \item A morphism in the Grothendieck construction $\int_{B : \Ba} \Ea_{B}\op$
    of the pointwise opposite will be denoted by
    $$\lens{f^{\sharp}}{f} : \lens{E}{B} \leftrightarrows \lens{E'}{B'}$$
    where $f : B \to B'$ and $f^{\sharp} : f^{\ast}E' \to E$. We will refer to
    these morphisms as \emph{lenses}, following \cite{spivak2019generalized}.
\end{itemize}
We will construct a functor $\varphi : \int_{B : \Ba} \Ea_B\op \to \Ea^{\vee}$ as follows:
\begin{itemize}
  \item $\varphi$ acts as the identity on objects.
  \item A lens $\lens{f^{\sharp}}{f} : \lens{E}{B} \leftrightarrows
    \lens{E'}{B'}$ gets sent to the span:
\[
      \begin{tikzcd}
        & \lens{f^{\ast}E'}{B} \arrow[dr, equals, shift left, "\lens{\id}{f}"] \arrow[dr, shift right]
        \arrow[dl, equals, shift left] \arrow[dl, shift right, "\lens{f^{\sharp}}{\id}"']& \\
        \lens{E}{B} & & \lens{E'}{B'}
      \end{tikzcd}
\]
\end{itemize}
This clearly sends identities to identities and preserves both classes of the
cartesian factorization system; we need to show that it preserves
composition. Suppose we have $\lens{f^{\sharp}}{f} : \lens{E}{B}
\leftrightarrows \lens{E'}{B'}$ and $\lens{g^{\sharp}}{g} : \lens{E'}{B'}
\leftrightarrows \lens{E''}{B''}$ with composite $\lens{f^{\sharp} \circ
  f^{\ast}g^{\sharp}}{g \circ f}$. We may then form the following diagram in $\Ea$:
\[
      \begin{tikzcd}
        & & \lens{f^{\ast}g^{\ast}E''}{B} \arrow[dr, equals, shift left, "\lens{\id}{f}"] \arrow[dr, shift right]
        \arrow[dl, equals, shift left] \arrow[dl, shift right,
        "\lens{f^{\ast}g^{\sharp}}{\id}"'] \arrow[dd, phantom, "\mathbin{\rotatebox[origin=c]{-45}{$\ulcorner$}}" very near start]& & \\
        & \lens{f^{\ast}E'}{B} \arrow[dr, equals, shift left, "\lens{\id}{f}"] \arrow[dr, shift right]
        \arrow[dl, equals, shift left] \arrow[dl, shift right,
        "\lens{f^{\sharp}}{\id}"'] & & \lens{g^{\ast}E''}{B'} \arrow[dr, equals, shift left, "\lens{\id}{g}"] \arrow[dr, shift right]
        \arrow[dl, equals, shift left] \arrow[dl, shift right,
        "\lens{g^{\sharp}}{\id}"'] & \\
        \lens{E}{B} & & \lens{E'}{B'} & & \lens{E''}{B''}
      \end{tikzcd}
\]
The square is a pullback by Lemma \ref{lem:cartesian.fact.pullback.square}, and
the outer span is the span associated to the composite $\lens{f^{\sharp} \circ
  f^{\ast}g^{\sharp}}{g \circ f}$. 

It remains to show that $\varphi$ is an equivalence. It is clearly essentially
surjective; we will show it is fully faithful. Every
span with left leg vertical and right leg cartesian is of the form
\[
      \begin{tikzcd}
        & \lens{f^{\ast}E'}{B} \arrow[dr, equals, shift left, "\lens{\id}{f}"] \arrow[dr, shift right]
        \arrow[dl, equals, shift left] \arrow[dl, shift right, "\lens{f^{\sharp}}{\id}"']& \\
        \lens{E}{B} & & \lens{E'}{B'}
      \end{tikzcd}
\]
and is therefore involves precisely the data of a lens $\lens{f^{\sharp}}{f} :
\lens{E}{B} \leftrightarrows \lens{E'}{B'}$.
\end{proof}

Since the morphisms of $\Ca^{\vee}$ are certain sorts of spans, we can form a
double category whose vertical category is $\Ca^{\vee}$ and whose horizontal
category is $\Ca$ and where squares are the usual squares in the double category
of spans. This construction mimics, in terms of cartesian factorization systems,
a \emph{Grothendieck double construction} which produces a double category from
an indexed category.

\begin{defn}
Let $\Ea : \Ba\op \to \Cat$ be an indexed category, and let $f^{\ast}$ denote $\Ea(f)$. Its \emph{Grothendieck double construction} $\Ea \wr \Ea\op$ is the double category with:
\begin{itemize}
    \item Objects pairs $\lens{E}{B}$ with $B \in \Ba$ and $E \in \Ea(B)$.
    \item Vertical morphisms $\lens{f^{\sharp}}{f} : \lens{E}{B} \leftrightarrows \lens{E'}{B'}$ are morphisms in the Grothendieck construction $\int \Ea\op$ of the pointwise opposite of $\Ea$, namely pairs $f : B \to B'$ and $f^{\sharp} : f^{\ast}B' \to B$.
    \item Horizontal morphisms $\lens{g_{\sharp}}{g} : \lens{E}{B} \rightrightarrows \lens{E'}{B'}$ are morphisms in the Grothendick construction $\int \Ea$ of $\Ea$, namely pairs $g : B \to B'$ and $g_{\sharp} : E \to g^{\ast} E'$.
    \item There is a square
    \[
    \begin{tikzcd}
        \lens{E_1}{B_1} \arrow[r, shift left, "\lens{g_{1\sharp}}{g_1}"]\arrow[r, shift right] \arrow[d, leftarrow,  shift left] \arrow[d, shift right, "\lens{f_1^{\sharp}}{f_1}"'] & \lens{E_2}{B_2} \arrow[d, leftarrow, shift left, "\lens{f_2^{\sharp}}{f_2}"] \arrow[d, shift right] \\
        \lens{E_3}{B_3} \arrow[r, shift left]\arrow[r, shift right, "\lens{g_{2\sharp}}{g_2}"'] & \lens{E_4}{B_4}
    \end{tikzcd}
    \]
    if and only if the following diagrams commute:
    \begin{equation} \label{eqn:groth.double.diagram}
\begin{tikzcd}
B_1 \arrow[r, "g_1"] \arrow[d, "f_1"'] & B_2 \arrow[d, "f_2"] &  & f_1^{\ast}E_3 \arrow[r, "f_1^{\sharp}"] \arrow[d, "f_1^{\ast}g_{2\sharp}"'] & E_1 \arrow[dd, "g_{1\sharp}"] \\
B_3 \arrow[r, "g_2"']                  & B_4                  &  & f_1^{\ast}g_2^{\ast}E_4 \arrow[d, equals]                                           &                               \\
                                       &                      &  & g_1^{\ast}f_2^{\ast}E_4 \arrow[r, "g_1^{\ast}f_2^{\sharp}"']                & g_1^{\ast}E_2                
                                     \end{tikzcd}
                                     \end{equation}
We will call the squares in the Grothendieck double construction \emph{commuting squares}, since they represent the proposition that the ``lower'' and ``upper'' squares appearing in their boundary commute.
\end{itemize}

Composition is given as in the appropriate Grothendieck constructions. It just remains to show that commuting squares compose.
\begin{itemize}
    \item For vertical composition we appeal to the following diagram:
    \[
\begin{tikzcd}
f_1^{\ast}f_3^{\ast} E_5 \arrow[rr, "f_1^{\ast}f_3^{\sharp}"] \arrow[d, "f_1^{\ast}f_3^{\ast}g_{3\sharp}"'] &                                                                                 & f_1^{\ast}E_3 \arrow[r, "f_1^{\sharp}"] \arrow[d, "f_1^{\ast}g_{2\sharp}"] & E_1 \arrow[dd, "g_{1\sharp}"] \\
f_1^{\ast}f_3^{\ast}g_3^{\ast}E_6 \arrow[d,equals] \arrow[r,equals]                                                       & f_1^{\ast}g_2^{\ast}f_4^{\ast}E_6 \arrow[r, "f_1^{\ast}g_2^{\ast}f_4^{\sharp}"] & f_1^{\ast}g_2^{\ast}E_4 \arrow[d,equals]                                           &                               \\
g_1^{\ast}f_2^{\ast}f_4^{\ast}E_6 \arrow[rr, "g_1^{\ast}f_2^{\ast}f_4^{\sharp}"']                           &                                                                                 & g_1^{\ast}f_2^{\ast}E_4 \arrow[r, "g_1^{\ast}f_2^{\sharp}"']                & g_1^{\ast}E_2                
\end{tikzcd}
    \]
    The outer diagram is the ``upper'' square of the composite, while the ``upper'' squares of each factor appear in the top left and right respectively.
    \item For horizontal composition we appeal to the following diagram:
    \[
\begin{tikzcd}
f_1^{\ast}E_3 \arrow[rr, "f_1^{\sharp}"] \arrow[d, "f_1^{\ast}g_{2\sharp}"']     &                                                                                                           & E_1 \arrow[d, "g_{1\sharp}"]                      \\
f_1^{\ast}g_2^{\ast}E_4 \arrow[r,equals] \arrow[dd, "f_1^{\ast}g_2^{\ast}g_{4\sharp}"'] & g_1^{\ast}f_2^{\ast}E_4 \arrow[r, "g_1^{\ast}f_2^{\sharp}"] \arrow[d, "g_1^{\ast}f_2^{\ast}g_{4\sharp}"'] & g_1^{\ast}E_2 \arrow[dd, "g_1^{\ast}g_{3\sharp}"] \\
                                                                                 & g_1^{\ast}f_2^{\ast}g_4^{\ast}E_6 \arrow[d,equals]                                                               &                                                   \\
f_1^{\ast}g_2^{\ast}g_4^{\ast}E_6 \arrow[r,equals]                                      & g_1^{\ast}g_3^{\ast}f_3^{\ast} \arrow[r, "g_1^{\ast}g_3^{\ast}f_3^{\sharp}"']                             & g_1^{\ast}g_3^{\ast}                             
\end{tikzcd}
    \]
\end{itemize}
\end{defn}

\begin{thm}
For an indexed category $\Ea : \Ba\op \to \Cat$, there is an equivalence of
double categories between the Grothendieck double construction of $\Ea$ and the double category of
spans in its Grothendieck construction with left leg vertical and right leg horizontal.
\end{thm}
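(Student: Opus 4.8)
The plan is to promote the equivalence $\varphi \colon \int\Ea\op \xrightarrow{\sim} \Ea^\vee$ of Theorem~\ref{thm:fiberwise.op.square} to an equivalence of double categories. Write $\type{Span}$ for the span double category: its objects and horizontal morphisms are exactly the objects and morphisms of $\int\Ea$, its vertical morphisms are the spans of $\Ea^\vee$ (left leg in $\epi$, right leg in $\mono$), and its squares are the usual squares of spans. I would define a double functor $\bar\varphi \colon \Ea\wr\Ea\op \to \type{Span}$ which is the identity on objects, the identity on horizontal morphisms (both horizontal categories are literally $\int\Ea$), and is $\varphi$ on vertical morphisms. Since $\varphi$ is an equivalence that preserves composition, the only real content is to match the two notions of square; once that is done, $\bar\varphi$ is an equivalence because it is the identity on the horizontal category, the equivalence $\varphi$ on the vertical category, and a boundary-preserving bijection on squares.

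The heart is the following bookkeeping. Fix boundary data: horizontal morphisms $\lens{g_{1\sharp}}{g_1}$ and $\lens{g_{2\sharp}}{g_2}$ on top and bottom, and vertical lenses $\lens{f_1^\sharp}{f_1}$, $\lens{f_2^\sharp}{f_2}$ on the left and right. Under $\varphi$ the left and right lenses become the spans with apexes $\lens{f_1^\ast E_3}{B_1}$ and $\lens{f_2^\ast E_4}{B_2}$, whose $\epi$-legs are $\lens{f_i^\sharp}{\id}$ and whose $\mono$-legs are $\lens{\id}{f_i}$. A square of $\type{Span}$ over this boundary is a single filler $w$ between the two apexes compatible with both pairs of legs, and by Lemma~\ref{lem:span.morphism.uniqueness} there is at most one such $w$. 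I would first read off the base component: compatibility with the $\epi$-legs forces $w$ to lie over $g_1$, and compatibility with the $\mono$-legs then forces $f_2 g_1 = g_2 f_1$, which is exactly the left-hand square of \eqref{eqn:groth.double.diagram}.

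Writing $w = \lens{w_\sharp}{g_1}$ with $w_\sharp \colon f_1^\ast E_3 \to g_1^\ast f_2^\ast E_4$, I would then unwind the two leg-compatibilities using the composition formula of the Grothendieck construction. Compatibility with the $\mono$-legs pins down $w_\sharp = f_1^\ast(g_{2\sharp})$ (transported along the canonical identification $f_1^\ast g_2^\ast \cong g_1^\ast f_2^\ast$ coming from $f_2 g_1 = g_2 f_1$), and compatibility with the $\epi$-legs then becomes precisely the equation $g_{1\sharp}\circ f_1^\sharp = g_1^\ast(f_2^\sharp)\circ f_1^\ast(g_{2\sharp})$, i.e.\ the commutativity of the right-hand diagram of \eqref{eqn:groth.double.diagram}. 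Thus a filler $w$ exists if and only if both diagrams of \eqref{eqn:groth.double.diagram} commute, which is exactly the condition defining a commuting square of $\Ea\wr\Ea\op$, and when it exists it is unique. This gives the desired bijection on squares over each boundary. Because squares of both double categories are determined by (at most one per) their boundary, preservation of horizontal and vertical composition of squares is automatic, so $\bar\varphi$ is a genuine double functor and hence an equivalence.

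I expect the main obstacle to be the coherence bookkeeping when $\Ea$ is a genuine pseudofunctor rather than a strict one: the identification $f_1^\ast g_2^\ast E_4 \cong g_1^\ast f_2^\ast E_4$ is only a canonical isomorphism, so I must be careful that \eqref{eqn:groth.double.diagram} is read, and the filler $w_\sharp$ constructed, up to the coherence isomorphisms of $\Ea$, and that the two pasting diagrams in the definition of $\Ea\wr\Ea\op$ (which make commuting squares compose) correspond under $\bar\varphi$ to the evident pullback-pasting of span squares guaranteed by Lemma~\ref{lem:cartesian.fact.pullback.square}. None of this is conceptually difficult, but it is where the care is required.
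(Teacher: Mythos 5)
Your proposal is correct and follows essentially the same route as the paper: both reduce the theorem to showing that a filler between the apexes $\lens{f_1^\ast E_3}{B_1}$ and $\lens{f_2^\ast E_4}{B_2}$ exists if and only if the two diagrams of \eqref{eqn:groth.double.diagram} commute, using Lemma~\ref{lem:span.morphism.uniqueness} for uniqueness and extracting $x = g_1$ and $y = f_1^\ast g_{2\sharp}$ from the leg compatibilities in the same order. Your added observations (that composition of squares is then automatic, and that pseudofunctor coherence needs care) are sensible refinements the paper leaves implicit.
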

\begin{proof}
By Theorem \ref{thm:fiberwise.op.square}, the vertical and horizontal categories
of these two double categories are equivalent. It remains to show that there
exists a map of spans
\begin{equation}\label{eqn:map.of.spans}
    \begin{tikzcd}
      \lens{E_1}{B_1} \arrow[r, shift left, "\lens{g_{1\sharp}}{g_1}"]\arrow[r, shift right] \arrow[d, leftarrow, shift left] \arrow[d, shift right, equals, "\lens{f_1^{\sharp}}{\id}"'] & \lens{E_2}{B_2} \arrow[d, leftarrow, shift left, "\lens{f_2^{\sharp}}{\id}"] \arrow[d, equals, shift right] \\
      \lens{f_1^{\ast}E_3}{B_1} \arrow[d, equals, shift left] \arrow[d, shift right,  "\lens{\id}{f_1}"']\arrow[r, shift left, dashed]\arrow[r, shift right, dashed] & \lens{f_2^{\ast}E_4}{B_2} \arrow[d, equals, shift left, "\lens{\id}{f_2}"] \arrow[d, shift right] \\
        \lens{E_3}{B_3} \arrow[r, shift left]\arrow[r, shift right, "\lens{g_{2\sharp}}{g_2}"'] & \lens{E_4}{B_4}
    \end{tikzcd}
\end{equation}
if and only if the appropriate diagrams as in Diagram
\ref{eqn:groth.double.diagram} commute. We note that, by Lemma
\ref{lem:span.morphism.uniqueness}, such a map of spans is unique if it exists.

A dashed map as in Diagram \ref{eqn:map.of.spans} consists of a component $x :
B_1 \to B_2$ and $y : f_1^{\ast} E_1 \to x^{\ast}f_2^{\ast}E_4$. We consider
what the diagram says about $x$ first. By the commutativity of the top square,
$x$ must equal $g_1$, and therefore the bottom square expresses the
commutativity of the left square of Diagram \ref{eqn:groth.double.diagram}.

Now, $y$ must have signature $f_1^{\ast}E_3 \to g_1^{\ast}f_2^{\ast}E_4$, or
equivalently $f_1^{\ast} E_1 \to f_1^{\ast} g_2^{\ast} E_4$ by the fact that
$f_2 g_1 = g_2 f_1$. The bottom square now says that $y =
f_1^{\ast}g_{2\sharp}$, so that the top square now gives us the right square of
Diagram \ref{eqn:groth.double.diagram}.

Of course, if the squares of Diagram \ref{eqn:groth.double.diagram} commute,
then we can make these choice of $x$ and $y$ in order to give such a morphism of spans.
\end{proof}

There is a useful corollary of this result: any pullback preserving functor $F :
\Ea \to \Ca$ can extend to a double functor $\Ea \wr \Ea^{\vee} \to \type{Span}(\Ca)$.

Finally, we show that this construction is 2-functorial.

\begin{thm}\label{thm:double.functoriality}
The assignment $\Ca \mapsto \Ca \wr \Ca^{\vee}$ sending a cartesian
factorization system to the double category of spans with left leg vertical and
right leg cartesian gives a 2-functor $\type{Cart} \to \type{Dbl}$ from the
2-category of cartesian factorization systems and functors which preserve both
classes to the 2-category of double categories, functors, and horizontal transformations.
\end{thm}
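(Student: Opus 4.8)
The plan is to extend the object-assignment $\Ca \mapsto \Ca \wr \Ca^{\vee}$ to $1$-cells and $2$-cells and then to observe that essentially all of the coherence conditions collapse, because a square of $\Ca \wr \Ca^{\vee}$ is a morphism of spans and hence is uniquely determined by its boundary (Lemma \ref{lem:span.morphism.uniqueness}). So the only genuine work is the \emph{existence} half of each assignment; the equational content is free. Concretely I must produce, for a class-preserving $F : \Ca \to \Da$, a double functor $\widehat{F} : \Ca \wr \Ca^{\vee} \to \Da \wr \Da^{\vee}$, and for a natural transformation $\theta : F \Rightarrow G$ a horizontal transformation $\widehat{\theta} : \widehat{F} \Rightarrow \widehat{G}$, and then check these are $2$-functorial.

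For the action on $1$-cells I would let $\widehat{F}$ act as $F$ on objects and on horizontal morphisms (which are just arrows of $\Ca$), as the fiberwise dual $F^{\vee}$ of Proposition \ref{prop:fiberwise.dual.functor} on vertical morphisms, and on a square---a map of spans, i.e.\ a single arrow between the apexes---by applying $F$ to that apex arrow. Two things need checking: that $\widehat{F}$ lands in squares, and that it is functorial. Both rest on the one fact already exploited in Proposition \ref{prop:fiberwise.dual.functor}, namely that a class-preserving $F$ preserves the pullback squares of Lemma \ref{lem:cartesian.fact.pullback.square}; this is what makes the image of an apex arrow a legitimate square of the target and what makes $\widehat{F}$ respect the pullback-composition of vertical morphisms. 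Functoriality $\widehat{G}\,\widehat{F} = \widehat{GF}$ and $\widehat{\id} = \id$ is immediate on objects and horizontals, and on verticals it is exactly the functoriality of $(-)^{\vee}$ from Proposition \ref{prop:fiberwise.dual.functor}, up to the canonical comparison isomorphisms for the chosen pullbacks used to compose spans.

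For the action on $2$-cells, given $\theta : F \Rightarrow G$ I would take $\widehat{\theta}$ to have component at an object $A$ the horizontal arrow $\theta_A : FA \to GA$, and at a vertical morphism $v = (A \twoheadleftarrow M \rightarrowtail B)$ the naturality square whose apex arrow is the component $\theta_M : FM \to GM$. This has the correct boundary precisely because naturality of $\theta$ at the two legs $M \twoheadrightarrow A$ and $M \rightarrowtail B$ says that $\theta_M$ commutes with the $F$- and $G$-images of those legs; and it is the unique square with that boundary by Lemma \ref{lem:span.morphism.uniqueness}. Restricted to horizontal morphisms $\widehat{\theta}$ is just $\theta$ acting on $\Ca$, which is natural, so the horizontal-naturality requirement holds on the nose.

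Finally I would verify the horizontal-transformation axioms for $\widehat{\theta}$ (compatibility with vertical composition and identities of vertical morphisms, and with the squares of the domain, as in Diagram \ref{eqn:groth.double.diagram}) together with $2$-functoriality of $\Ca \mapsto \Ca \wr \Ca^{\vee}$ (preservation of vertical and horizontal composites of $2$-cells and of whiskering). This is where the setup pays off: each such axiom is an equation between two squares of $\Da \wr \Da^{\vee}$ which, by construction, share the same boundary, so by the uniqueness clause of Lemma \ref{lem:span.morphism.uniqueness} the two squares coincide and the axiom holds automatically. The main obstacle is therefore not the equational bookkeeping but the existence statements underpinning it---that $F$ of an apex arrow, and $\theta_M$, really are squares of the target---and these reduce to the two facts established earlier, that class-preserving functors preserve the cartesian pullback squares (Lemma \ref{lem:cartesian.fact.pullback.square}) and that $\theta$ is natural. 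Once these are in hand, uniqueness of squares discharges all remaining coherence, completing the proof.
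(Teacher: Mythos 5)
Your proposal is correct and follows essentially the same route as the paper: define the double functor by applying $F$ to objects, horizontal arrows, and span apexes (using that class-preserving functors preserve the pullback squares of Lemma \ref{lem:cartesian.fact.pullback.square}), send $\theta$ to the horizontal transformation with components $\theta_A$ and $\theta_M$ on apexes, and discharge all coherence by uniqueness of span morphisms. The paper states this more tersely, appealing to the uniqueness clause of the pullback universal property where you invoke Lemma \ref{lem:span.morphism.uniqueness}, but these are the same observation.
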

\begin{proof}
A functor $F : \Ca_1 \to \Ca_2$ which preserves both classes with therefore
preserve pullbacks of the form 
\[
\begin{tikzcd}
\bullet \arrow[d, two heads] \arrow[r, tail]  & \bullet \arrow[d, two heads] \\
\bullet \arrow[r, tail]                                 & \bullet                     
\end{tikzcd}
\]
by Lemma \ref{lem:cartesian.fact.pullback.square}. The assignment $\Ca \wr
\Ca^{\vee}$ is therefore transparently functorial.

Given a natural transformation $\alpha : F \Rightarrow G : \Ca_0 \to \Ca_1$, we may construct a
horizontal natural transformation $\overline{\alpha} : F \wr F^{\vee}
\Rightarrow G \wr G^{\vee}$ using the action of $\alpha$. To every object $C$ of
$\Ca_0 \wr \Ca_0^{\vee}$, we assign the horizontal map $\alpha_C : FC \to GC$.
To every span $C \twoheadleftarrow Z \hookrightarrow C'$ we assign the map of
spans:
\[
\begin{tikzcd}
FC \arrow[r, "\alpha_C"]                                      & GC                                      \\
FZ \arrow[u, two heads] \arrow[d, tail] \arrow[r, "\alpha_Z"] & GZ \arrow[u, two heads] \arrow[d, tail] \\
FC' \arrow[r, "\alpha_{C'}"]                                  & GC'                                    
\end{tikzcd}
\]
These satisfy the required laws quite straightforwardly from the naturality of
$\alpha$ and the uniqueness part of the universal property of pullbacks.
\end{proof}

\printbibliography

\end{document}